				\newtheorem{thm}{Theorem}[section]
				\newtheorem{lem}[thm]{Lemma}
				\newtheorem{propos}[thm]{Proposition}
				\newtheorem{rem}[thm]{Remark}
				\newcommand{\enter}{\bigskip}
\begin{document}
\thispagestyle{empty}
\author{Sanjiv Kumar Bariwal ${}^1$	\footnote{{\it{${}$ Corresponding author. \, Email address:}} p20190043@pilani.bits-pilani.ac.in}	, Rajesh Kumar ${}^2$\footnote{{\it{${}$ Email address:}} rajesh.kumar@pilani.bits-pilani.ac.in}\\
\footnotesize ${}^{1,2}$Department of Mathematics, Birla Institute of Technology and Science Pilani,\\ \small{ Pilani-333031, Rajasthan, India}\\
}
	\date{}																		
\title{{ Finite volume convergence analysis and error estimation for non-linear collisional induced breakage equation}}	
									
\maketitle
										
\begin{quote}
{\small {\em\bf Abstract}}: This article focuses on the finite volume method (FVM) as an instrument tool to deal with the non-linear collisional-induced breakage equation (CBE) that arises in the particulate process. Notably, we consider the non-conservative approximation of the CBE. The analysis of weak convergence of the approximated solutions under a feasible stability condition on the time step is investigated for locally bounded breakage and collision kernels. Subsequently, explicit error estimation of the FVM solutions in uniform mesh having the kernels in the class of $W_{loc}^{1,\infty}$ space. It is also shown numerically for the first-order convergent scheme by taking numerical examples.
	\end{quote}
\textbf{Keywords}: Non-linear breakage, Finite volume method, Weak Solution, Convergence, Error.\\

{\bf{Mathematics Subject Classification (2020) 45L05, 45K05, 65R10}}\\

\section{Introduction}
Particulate processes are prominent in particle evolution dynamics and portray  how particles might unite to generate extensive size of particle or break into tiny particles. These particles  are labelled as   
  Particle is entirely defined by a single size variable, such as its volume or mass. There are two types of particle breakage or fragmentation: linear breakage and nonlinear collision-induced breakage. The effectiveness of the linear breakage equation is well recognized in the study of significant phenomena in a variety of scientific fields, including engineering; for further references, see \cite{diemer2021applications, danha2015application}. It is thought that in order to increase the range of operations that can be assessed, it must be expanded. One conceivable expansion is to include nonlinearity in the breaking process which can occur when the fragmentation behaviour of a particle is determined not only by its characteristics and dynamic circumstances (as in linear breakage), but also by the state and properties of the entire system, i.e., by binary interactions. Collisional breakage could enable some mass transfer between colliding particles. As a result, daughter particles with more extensive volumes than the parent particles are generated.  Non-linear models emerge in a wide range of contexts, including milling and crushing processes \cite{spampinato2017modelling,chen2020collision}, bulk distribution of asteroids \cite{kudzotsa2013mechanisms,yano2016explosive}, fluidized beds \cite{cruger2016coefficient,lee2017development}, etc. \\

The following partial integro-differential equation was used by Cheng and Redner \cite{cheng1988scaling} to obtain the binary collisional breakage equation, and is expressed the evolution of particles in terms of volume $x \in ]0,\infty[$ at time $t\geq 0$ and is defined by
\begin{align}\label{maineq}
\frac{\partial{\mathcal{C}(\mathrm{t},\mathrm{m})}}{\partial t}= \int_0^\infty\int_{\mathrm{m}}^{\infty} \mathbb{K}(\mathrm{n},z)\mathbb{B}(\mathrm{m},\mathrm{n},z)\mathcal{C}(t,\mathrm{\mathrm{n}})\mathcal{C}(t,z)\,d\mathrm{n}\,dz -\int_{0}^{\infty}\mathbb{K}(\mathrm{m},\mathrm{n})\mathcal{C}(t,\mathrm{m})\mathcal{C}(t,\mathrm{n})\,d\mathrm{n}
\end{align}
with the initial data
\begin{align}\label{initial}
\mathcal{C}(0,\mathrm{m})\ \ = \ \ \mathcal{C}^{in}(\mathrm{m}) \geq 0, \ \ \ \mathrm{m} \in \mathbb{R}^{+}:= ]0,\infty[.
\end{align}
In Eq.(\ref{maineq}), $\mathcal{C}(t,\mathrm{m})$  is the concentration function with volume between $\mathrm{m}$ and $\mathrm{m}+d\mathrm{m}$, and the collision kernel $\mathbb{K}(\mathrm{m},\mathrm{n})$ depicts the collision frequency  of particles of  $\mathrm{m}$ and $\mathrm{n}$ volume for breakage event. In practice, $\mathbb{K}(\mathrm{m},\mathrm{n}) \geq 0$ and $\mathbb{K}(\mathrm{m},\mathrm{n})=\mathbb{K}(\mathrm{n},\mathrm{m})$. The term  $\mathbb{B}(\mathrm{m},\mathrm{n},z)$ is called the breakage distribution function, which defines the rate for production of  $\mathrm{m}$ volume particles by splitting of  $\mathrm{n}$ volume particle due to interaction with $z$. The function $\mathbb{B}$ holds
\begin{align}\label{breakagefunc}
\int_{0}^{\mathrm{n}}\mathrm{m}\mathbb{B}(\mathrm{m},\mathrm{n},z)d\mathrm{m}= \mathrm{n}, \,\,\mathbb{B}(\mathrm{m},\mathrm{n},z)\neq 0\,\,\, \text{for} \hspace{0.4cm} \mathrm{m} \in (0,\mathrm{n})\,\,\, \text{and} \hspace{0.4cm} \mathbb{B}(\mathrm{m},\mathrm{n},z)=0 \,\,\,\text{for} \hspace{0.4cm} \mathrm{m}> \mathrm{n}.
\end{align}
The first term in Eq.(\ref{maineq}) explains gaining $\mathrm{m}$ volume particles due to  collisional breakage  between  $\mathrm{n}$ and $z$ volume particles, known as the birth term. The second term is labeled as the death term and describes the disappearance of  $\mathrm{m}$ volume particles due to collision with particles of volume $\mathrm{n}$. \\

It is also necessary to specify some integral features of the concentration function $\mathcal{C}(t,\mathrm{m})$, known as moments. The following equation defines the $j^{th}$ moment of the solution as
\begin{align}\label{momemt}
M_{j}(t)=\int_{0}^{\infty}\mathrm{m}^{j}\mathcal{C}(t,\mathrm{m})\,d\mathrm{m},
\end{align}
where the zeroth $M_{0}(t)$ and first $M_{1}(t)$ moments are equal to the total number and volume of particles in the system, respectively.
 Here, $M_{1}^{in} < \infty$ conveys the initial  particles volume  in the closed particulate system.\\

Before venturing into the specifics of the current work, let us reanalyze the existing literature on the linear breakage equation that has been widely investigated over the years concerning its analytical solutions \cite{ziff1991new,ziff1985kinetics}, similarity solutions \cite{peterson1986similarity,breschi2017note}, numerical results \cite{ hosseininia2006numerical,catak2010discrete,liao2018discrete} and references therein. There is a substantial body of work on the well-posedness of the coagulation and linear breakage equations (CLBE). The authors examined the existence of mass-conserving solutions to CLBE by having non-conservating approximations for unbounded coagulation and the breakage functions in \cite{barik2018note}. Moreover, many publications are accessible for solving coagulation-fragmentation equations numerically, including the method of moments \cite{attarakih2009solution}, finite element scheme \cite{ahmed2013stabilized}, Monte Carlo methodology \cite{lin2002solution}, and  finite volume method (FVM)\cite{bourgade2008convergence,bariwal2023convergence,bariwal2023numerical} to name a few. \\

Collisional breakage model is discussed in just a few mathematical articles in the literature, see \cite{cheng1988scaling,kostoglou2000study,laurenccot2001discrete,walker2002coalescence, barik2020global,das2022existence}. In \cite{cheng1988scaling}, the general behaviour of CBE is reviewed for homogeneous breakage functions. The model's analytical and self-similar solutions are discovered in \cite{kostoglou2000study} for some combinations of breakage and collision kernels.
The authors explained the global classical solutions of coagulation and collisional equation with collision kernel, growing indefinitely for large volumes in \cite{barik2020global}. In addition, fewer publications in the physics literature \cite{cheng1990kinetics,ernst2007nonlinear,krapivsky2003shattering} are committed to the CBE, with the majority dealing with scaling behavior and shattering transitions.  Lauren\c{c}ot and Wrzosek \cite{laurenccot2001discrete} investigated the existence and uniqueness of a solution for coagulation with CBE in which they have used the following constraint over the kernels 
$$\mathbb{K}(\mathrm{m},\mathrm{n}) \leq (\mathrm{m}\mathrm{n})^{\alpha}, \,\, \alpha\in [0,1) \,\, \text{and}\,\, \mathbb{B}(\mathrm{m},\mathrm{n},z)\leq P< \infty,\,\, \text{for}\,\, 1\leq \mathrm{m}< \mathrm{n}.$$
They have also explored gelation and the long-term behavior of solutions. Further, in \cite{giri2021weak}, the analysis of weak solution is developed with the coagulation dominating process where the collision kernel grows at most linearly at infinity. \\

Currently, in the numerical sense, it has been reported by several authors that the FVM is an appropriate option among the other  numerical techniques  for solving aggregation and  breakage  equations due to its mass conservation property. Therefore, FVM is presented for collision-induced breakage equation to analyze the concentration function.

 None of the previous studies have taken into account the non-linear collisional breakage equation, primarily due to the complexity of the model and the analytical investigation involved. In this regard, it is mandatory to implement a numerical method to observe the concentration function for physical relevant kernels. Here we perform the weak convergence of the non-conservative numerical scheme (FVM) for solving the collisional breakage equation. Therefore, this article is an attempt to study the weak convergence analysis of the FVM for solving the model with nonsingular unbounded kernels and then error estimation for kernels being in $W_{loc}^{1,\infty}$ space over a uniform mesh. Thanks to the idea taken from  Bourgade and Filbet \cite{bourgade2008convergence}, in which they have treated coagulation and binary fragmentation equation. The proof employs the weak $L^1$ compactness approach.\\

To proceed further, firstly, we will concentrate on the functional setting, having in mind that the initial concentration function contains the properties of mass conservation and finiteness of total particles in the system. Therefore, we construct the solution space that exhibits the convergence of the discretized numerical solution to the weak solution of the CBE (\ref{maineq}), and is recognized as a weighted $L^1$ space such as 
	\begin{align}\label{Space}
		X^{+} = \{\mathcal{C}\in L^{1}(\mathbb{R}^{+})\cap L^1(\mathbb{R}^{+}, \mathrm{m}\,d\mathrm{m}): \mathcal{C}\geq 0, \|\mathcal{C}\|< \infty\},
	\end{align}
where $\|\mathcal{C}\|= \int_{0}^{\infty}(1+\mathrm{m})\mathcal{C}(\mathrm{m})\,d\mathrm{m},$ and $\mathcal{C}^{in} \in X^{+}$, i.e., $M_{1}^{in}$ is a finite quantity. \\
	
Now, the specifications of the kernels  $\mathbb{B}$ and $\mathbb{K}$ are expressed in the following expression: both functions are  measurable over the domain. There exist  $\zeta, \eta$  with $0<\zeta \leq \eta \leq 1, \, \zeta+ \eta \leq 1 $ and $\alpha\geq 0$, $\lambda >0$ such that 	
\begin{align}\label{breakage funcn}
H1:  \hspace{0.2cm}	\mathbb{B}\in L_{\text{loc}}^\infty{(\mathbb{R}^{+} \times \mathbb{R}^{+} \times \mathbb{R}^{+}) },
\end{align}
$H2:$  
\begin{equation}\label{Collisional func}
\mathbb{K}(\mathrm{m},\mathrm{n})=\left\{
 \begin{array}{ll}
				           
 \lambda \mathrm{m}\mathrm{n} & \quad (\mathrm{m},\mathrm{n})\in ]0,1[\times ]0,1[\\
 \lambda \mathrm{m}\mathrm{n}^{-\alpha} & \quad (\mathrm{m},\mathrm{n})\in ]0,1[\times ]1,\infty[\\
 \lambda \mathrm{m}^{-\alpha}\mathrm{n} & \quad (\mathrm{m},\mathrm{n})\in ]1,\infty[\times ]0,1[\\
\lambda(\mathrm{m}^{\zeta}\mathrm{n}^{\eta}+\mathrm{m}^{\eta}\mathrm{n}^{\zeta}) & \quad (\mathrm{m},\mathrm{n})\in ]1,\infty[\times ]1,\infty[. 
 \end{array}
\right.
\end{equation}

For the sake of understanding the breakage function, those are locally bounded \cite{das2020approximate}, i.e., $\mathbb{B}(\mathrm{m},\mathrm{n},z)=\delta(\mathrm{m}-0.4\mathrm{n})+\delta(\mathrm{m}-0.6\mathrm{n})$, where both colliding particles will fragment into two particles of size $40\%$ and $60\%$ of the parent particles. One more example is 
\begin{equation*}
\mathbb{B}(\mathrm{m},\mathrm{n},z)=\left\{
 \begin{array}{ll}	           
 \frac{2}{\mathrm{n}} & \quad \text{if}\,\, \mathrm{n} >z,\\
 \delta(\mathrm{m}-\mathrm{n}) & \quad \text{if} \,\, \mathrm{n} \leq z, 
 \end{array}
\right.
\end{equation*}
where only large size particle breaks into two smaller size particles.\\

This article's contents are organized as follows. The discretization methodology based on the FVM and non-conservative form of fully discretized CBE are introduced in Section \ref{scheme}. Next, Section \ref{convergence} equips the convergence study of weak solution in $L^1$ space. In Section \ref{Error}, error analysis of FVM solution with first-order accuracy is described for uniform meshes. Additionally, we have justified  the theoretical error estimation via numerical results in Section \ref{testing}. Consequently, in the final section, some  conclusions are presented.
\section{Numerical Scheme}\label{scheme}
In this section, we commence exploring the FVM for the solution of Eq.(\ref{maineq}). It is based on the spatial domain being divided into tiny grid cells.
 Particle volumes ranging from $0$ to $\infty$ are taken into account in Eq.(\ref{maineq}). Nevertheless, we define the particle volumes to be in a finite domain for practical purposes. The scheme considers $]0, \mathfrak{R}]$ as truncated domain  with $0<R <\infty$. Thus the collisional breakage equation is  truncated as 
		\begin{align}\label{trunceq}
				\frac{\partial{\mathcal{C}(t,\mathrm{m})}}{\partial t}=  \int_0^\mathfrak{R}\int_{\mathrm{m}}^{\mathfrak{R}} \mathbb{K}(\mathrm{n},z)\mathbb{B}(\mathrm{m},\mathrm{n},z)\mathcal{C}(t,\mathrm{n})\mathcal{C}(t,z)\,d\mathrm{n}\,dz -\int_{0}^{\mathfrak{R}}\mathbb{K}(\mathrm{m},\mathrm{n})\mathcal{C}(t,\mathrm{m})\mathcal{C}(t,\mathrm{n})\,d\mathrm{n},
					\end{align}
				and 
					\begin{align}\label{initial1}
								\mathcal{C}(0,\mathrm{m})\ \ = \ \ \mathcal{C}^{in}(\mathrm{m}) \geq 0, \ \ \ \mathrm{m} \in ]0,\mathfrak{R}].
					\end{align}	
Consider a partitioning of the operating domain $]0,\mathfrak{R}]$ into small cells as $\wp_a:=]\mathrm{m}_{a-1/2}, \mathrm{m}_{a+1/2}], \,\,a=1,2, \ldots, \mathfrak{I}$, where\,\, $\mathrm{m}_{1/2}=0, \ \ \mathrm{m}_{\mathfrak{I}+1/2}= \mathfrak{R}, \hspace{0.2cm} \Delta \mathrm{m}_a=\mathrm{m}_{a+1/2}-\mathrm{m}_{a-1/2}$ 
and  consider	$	\hslash= \text{max} \, \Delta \mathrm{m}_a \  \forall \ a. $ 	The grid points are the midpoints of each subinterval and are designated as $\mathrm{m}_a\,\, \forall\, a.$ 
Now, the expression of the mean value of the concentration function $\mathcal{C}_a(t)$ in the cell $\wp_a$ is determined  by 	
		\begin{align}\label{meandens}
							\mathcal{C}_{a}(t)=\frac{1}{\Delta \mathrm{m}_a}\int_{\mathrm{m}_{a-1/2}}^{\mathrm{m}_{a+1/2}}\mathcal{C}(t,\mathrm{m})\,d\mathrm{m}.
						\end{align}
	The time domain is limited to the range $[0,\mathfrak{T}]$ with $0 < \mathfrak{T} < \infty$, and it is discretized into $N$ time intervals with time step $\Delta t$. The interval is defined as 		
$$ \tau_n=[t_n,t_{n+1}[, \,\,  n=0,1,\ldots,N-1.$$ 
We now begin developing the scheme on non-uniform meshes. It has the big advantage of covering a larger area with fewer points than a uniform grid.
The discretization differs slightly from that of Filbet and Laurencot \cite{bourgade2008convergence},  where they first converted the model (\ref{maineq}) to a conservative equation using Leibniz integral rule, then discretized using FVM.
Here, in this work, we have used a scheme based on finite volume implementation from the continuous equation (\ref{maineq}). \\

 To derive the discretized version of the CBE (\ref{trunceq}), we  proceed as follows: integrating the Eq.(\ref{trunceq}) with respect to $\mathrm{m}$ over  $a^{th}$ cell yields the following discrete form
\begin{align}\label{semi}
\frac{d\mathcal{C}_a}{dt}=B_{\mathcal{C}}(a)-D_{\mathcal{C}}(a),
\end{align}
where
\begin{align*}
B_{\mathcal{C}}(a)=\frac{1}{\Delta \mathrm{m}_a}\int_{\mathrm{m}_{a-1/2}}^{\mathrm{m}_{a+1/2}}\int_0^{\mathrm{m}_{\mathfrak{I}+1/2}}\int_{\mathrm{m}}^{\mathrm{m}_{\mathfrak{I}+1/2}} \mathbb{K}(\mathrm{n},z)\mathbb{B}(\mathrm{m},\mathrm{n},z)\mathcal{C}(t,\mathrm{n})\mathcal{C}(t,z)d\mathrm{n}\,dz\,d\mathrm{m}
\end{align*}
\begin{align*}
D_{\mathcal{C}}(a)= \frac{1}{\Delta \mathrm{m}_a}\int_{\mathrm{m}_{a-1/2}}^{\mathrm{m}_{a+1/2}}\int_0^{\mathrm{m}_{\mathfrak{I}+1/2}} \mathbb{K}(\mathrm{m},\mathrm{n})\mathcal{C}(t,\mathrm{m})\mathcal{C}(t,\mathrm{n})d\mathrm{n}\,d\mathrm{m}
\end{align*}
along with initial distribution,
\begin{align}
\mathcal{C}_{a}(0)=\mathcal{C}_{a}^{in}=\frac{1}{\Delta \mathrm{m}_a}\int_{\mathrm{m}_{a-1/2}}^{\mathrm{m}_{a+1/2}}\mathcal{C}_{0}(\mathrm{m})\,d\mathrm{m}.
\end{align}	
Implementing the midpoint rule to all of the above representations provide the semi-discrete equation after some simplifications as, 
\begin{align}\label{semidiscrete}
\frac{d\mathcal{C}_{a}}{dt}=&\frac{1}{\Delta \mathrm{m}_a}\sum_{l=1}^{\mathfrak{I}}\sum_{j=a}^{\mathfrak{I}}\mathbb{K}_{j,l}\mathcal{C}_{j}(t)\mathcal{C}_{l}(t)\Delta \mathrm{m}_{j}\Delta \mathrm{m}_{l}\int_{\mathrm{m}_{a-1/2}}^{p_{j}^{a}}\mathbb{B}(\mathrm{m},\mathrm{m}_{j},\mathrm{m}_{l})\,d\mathrm{m}-\sum_{j=1}^{\mathfrak{I}}\mathbb{K}_{a,j}\mathcal{C}_{a}(t)\mathcal{C}_{j}(t)\Delta \mathrm{m}_{j}, 
\end{align}
where the term $p_{j}^{a}$ is expressed by
\begin{equation}
p_{j}^{a} =
\begin{cases}
\mathrm{m}_{a}, & \text{if }\,j=a \\
\mathrm{m}_{a+1/2}, & j\neq a.							
\end{cases}
\end{equation}
Now, to obtain a fully discrete system, applying explicit Euler discretization to time variable $t$ leads to
\begin{align}\label{fully}
\mathcal{C}_{a}^{n+1}-\mathcal{C}_{a}^{n}=\frac{\Delta t}{\Delta \mathrm{m}_{a}}\sum_{l=1}^{\mathfrak{I}}\sum_{j=a}^{\mathfrak{I}}\mathbb{K}_{j,l}\mathcal{C}_{j}^{n}\mathcal{C}_{l}^{n}\Delta \mathrm{m}_{j}\Delta \mathrm{m}_{l}\int_{\mathrm{m}_{a-1/2}}^{p_{j}^{a}}\mathbb{B}(\mathrm{m},\mathrm{m}_{j},\mathrm{m}_{l})\,d\mathrm{m}  
-\Delta t \sum_{j=1}^{\mathfrak{I}}\mathbb{K}_{a,j}\mathcal{C}_{a}^{n}\mathcal{C}_{j}^{n}\Delta \mathrm{m}_{j}.
\end{align}
For the convergence analysis, consider a function $\mathcal{C}^{\hslash}$ on $[0,\mathfrak{T}]\times ]0,\mathfrak{R}]$ which is representated  by
\begin{align}\label{chap2:function_ch}
\mathcal{C}^{\hslash}(t,\mathrm{m})=\sum_{n=0}^{N-1}\sum_{a=1}^{\mathfrak{I}}\mathcal{C}_a^n\,\chi_{\wp_a}(\mathrm{m})\,\chi_{\tau_n}(t),
\end{align}
where $\chi_D(\mathrm{m})$ denotes the characteristic function on a set $D$ as $\chi_D(\mathrm{m})=1$ if $\mathrm{m}\in D$ or $0$ everywhere else. Also noting that $$\mathcal{C}^{\hslash}(0,\cdot)=\sum_{a=1}^{\mathfrak{I}}\mathcal{C}_a^{in} \chi_{\wp_a}(\cdot) \rightarrow \mathcal{C}^{in} \in L^1 ]0,\mathfrak{R}[\,\, \text{as} \,\, \hslash\rightarrow 0.$$ 
The convergence result relies on the correct approximation of the kernels through finite volume on the computational domain $u,v,w \in (0,\mathfrak{R}]$ and is given as 
\begin{align}\label{chap2:function_aggregatediscrete}
	\mathbb{K}^{\hslash}(u,v)= \sum_{a=1}^{\mathfrak{I}} \sum_{j=1}^{\mathfrak{I}} \mathbb{K}_{a,j} \chi_{\wp_a}(u) \chi_{\wp_j}(v),
\end{align}
\begin{align}\label{chap2:function_brkdiscrete}
	\mathbb{B}^{\hslash}(u,v,w)= \sum_{a=1}^{\mathfrak{I}} \sum_{j=1}^{\mathfrak{I}}\sum_{l=1}^{\mathfrak{I}} \mathbb{B}_{a,j,l} \chi_{\wp_a}(u) \chi_{\wp_j}(v)\chi_{\wp_l}(w),
	\end{align}

where $$\mathbb{K}_{a,j}= \frac{1}{\Delta \mathrm{m}_a \Delta \mathrm{m}_j} \int_{\wp_j} \int_{\wp_a} \mathbb{K}(u,v)du\,dv, \quad  \mathbb{B}_{a,j,l}= \frac{1}{\Delta \mathrm{m}_a \Delta \mathrm{m}_j \Delta \mathrm{m}_l}\int_{\wp_l} \int_{\wp_j} \int_{\wp_a} \mathbb{B}(u,v,w)du\,dv\,dw.$$
These kernel discretizations confirm the strong convergence of itself in $L^1$ space, i.e., $\mathbb{K}^{\hslash}\rightarrow \mathbb{K}$ and $\mathbb{B}^{\hslash} \rightarrow \mathbb{B}$ as $\hslash\rightarrow 0$; see \cite{bourgade2008convergence}.

\section{Weak Convergence}\label{convergence}
The objective of this section is to study the convergence of solution $\mathcal{C}^{\hslash}$ to a function $\mathcal{C}$  as $\hslash$ and $\Delta t$ $\rightarrow 0$. 

\begin{thm}\label{maintheorem}
Assume that the kernels hold the properties in (\ref{breakage funcn}-\ref{Collisional func}) and $\mathcal{C}^{in}$ satisfy (\ref{Space}). Furthermore, suppose that the time step $\Delta t$ confirms the existence of a positive constant $\theta$ such that
						\begin{align}\label{22}
							S(\mathfrak{T}, \mathfrak{R})\Delta t\le \theta< 1,
						\end{align}
						holds for 
						\begin{align}{\label{23}}
							S(\mathfrak{T}, \mathfrak{R}):= \lambda(2\mathfrak{R}  \|\mathcal{C}^{in}\|_{L^1}\,e^{2\lambda \mathfrak{R} \|\mathbb{B}\|_{L^{\infty}} M_{1}^{in} \mathfrak{T}} + M_{1}^{in}).
						\end{align}
						Then assurance of a subsequence arises that
					 $$\mathcal{C}^{\hslash}\rightarrow \mathcal{C}\ \
						\text{in}\ \ L^\infty([0,\mathfrak{T}];L^1\,]0,\mathfrak{R}[),$$
						  where  $\mathcal{C}$ is the weak solution of problem  (\ref{maineq}-\ref{initial}) on $[0,\mathfrak{T}]$. This implies that, the function $\mathcal{C}\geq 0$ satisfies
						\begin{align}\label{convergence0}
									\int_0^\mathfrak{T} &\int_0^{\mathfrak{R}} \mathcal{C}(t,\mathrm{m})\frac{\partial\varphi}{\partial
									t}(t,\mathrm{m})d\mathrm{m}\,dt -\int_0^\mathfrak{T}\int_0^{\mathfrak{R}} \int_0^{\mathfrak{R}} \int_{\mathrm{m}}^{\mathfrak{R}}\varphi(t,\mathrm{m})\mathbb{K}(\mathrm{n},z)\mathbb{B}(\mathrm{m},\mathrm{n},z)\mathcal{C}(t,\mathrm{n})\mathcal{C}(t,z)d\mathrm{n}\,dz\,d\mathrm{m}\,dt\nonumber\\
								&+\int_0^{\mathfrak{R}} \mathcal{C}^{in}(\mathrm{m})\varphi(0,\mathrm{m})d\mathrm{m}+ \int_0^\mathfrak{T} \int_0^{\mathfrak{R}} \int_{0}^{\mathfrak{R}}\varphi(t,\mathrm{m})\mathbb{K}(\mathrm{m},\mathrm{n})\mathcal{C}(t,\mathrm{m})\mathcal{C}(t,\mathrm{n})d\mathrm{n}\,d\mathrm{m}\,dt
							=0,
						\end{align}
						where $\varphi$ is compactly supported smooth functions on $[0,\mathfrak{T}]\times ]0,{\mathfrak{R}}].$
					\end{thm}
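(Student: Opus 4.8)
The plan is to adapt the weak $L^1$ compactness method of Bourgade and Filbet \cite{bourgade2008convergence} to the non-conservative scheme (\ref{fully}), organizing the argument into four stages: a priori estimates, weak $L^1$ compactness in the size variable, equicontinuity in time, and passage to the limit in the weak formulation. The first task is to recover the a priori bounds that are encoded in the definition of $S(\mathfrak{T},\mathfrak{R})$. Nonnegativity of $\mathcal{C}_a^n$ follows by induction on $n$: rewriting (\ref{fully}) as $\mathcal{C}_a^{n+1}=\mathcal{C}_a^n\big(1-\Delta t\sum_j\mathbb{K}_{a,j}\mathcal{C}_j^n\Delta\mathrm{m}_j\big)+(\text{birth})$, the birth term is manifestly nonnegative and the stability condition (\ref{22}) is chosen precisely so that the self-interaction factor stays in $[0,1]$. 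Next, multiplying (\ref{fully}) by $\mathrm{m}_a\,\Delta\mathrm{m}_a$, summing over $a$, and invoking the normalization (\ref{breakagefunc}) controls the first moment and yields $M_1^{\hslash}(t_n)\lesssim M_1^{in}$ up to truncation error; multiplying instead by $\Delta\mathrm{m}_a$ and bounding the gain term through $\|\mathbb{B}\|_{L^\infty}$, $\mathfrak{R}$ and the mass bound, a discrete Gronwall argument gives $\|\mathcal{C}^{\hslash}(t)\|_{L^1}\le\|\mathcal{C}^{in}\|_{L^1}\,e^{2\lambda\mathfrak{R}\|\mathbb{B}\|_{L^\infty}M_1^{in}\mathfrak{T}}$, which is exactly the quantity appearing in (\ref{23}) and the origin of the exponential factor there.

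Second, I would show that $\{\mathcal{C}^{\hslash}(t,\cdot)\}$ is weakly sequentially compact in $L^1(]0,\mathfrak{R}[)$ uniformly in $t$ via the Dunford--Pettis theorem. Since the size domain is bounded, tightness at infinity is automatic, so the genuine content is uniform integrability. I would obtain this from the refined de la Vall\'ee--Poussin theorem: picking a nonnegative convex $\Phi$ with $\Phi(s)/s\to\infty$ and $\int_0^{\mathfrak{R}}\Phi(\mathcal{C}^{in})<\infty$ (available because $\mathcal{C}^{in}\in L^1$), I would propagate a bound on $\int_0^{\mathfrak{R}}\Phi(\mathcal{C}^{\hslash})$ through the scheme, using convexity of $\Phi$ on the gain term together with the estimates of the first stage to close a Gronwall inequality. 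This equi-integrability is the crux of the whole argument and the step I expect to be hardest, because it is simultaneously the key to compactness and, later, to passing to the limit in the nonlinear term.

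Third, I would establish weak time-equicontinuity: from (\ref{fully}) the increment $\mathcal{C}_a^{n+1}-\mathcal{C}_a^n$ is controlled in a weak norm by the birth and death rates, which are bounded by the first stage, giving $\int_0^{\mathfrak{R}}|\mathcal{C}^{\hslash}(t+\tau,\mathrm{m})-\mathcal{C}^{\hslash}(t,\mathrm{m})|\,\psi(\mathrm{m})\,d\mathrm{m}\le\omega(\tau)$ for smooth $\psi$, uniformly in $\hslash$. Combining this with the weak compactness of the second stage and a weak-$L^1$ version of the Arzel\`a--Ascoli theorem, I extract a subsequence converging to some $\mathcal{C}$ in the topology stated in the theorem.

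Finally, I would pass to the limit. Multiplying (\ref{fully}) by a test value of $\varphi$, summing over $a$ and $n$, and using summation by parts in $n$ to transfer the difference onto $\partial_t\varphi$, I arrive at a discrete analogue of (\ref{convergence0}); the linear-in-time term and the initial term pass to the limit by the weak convergence of $\mathcal{C}^{\hslash}$ and by $\mathcal{C}^{\hslash}(0,\cdot)\to\mathcal{C}^{in}$. The two quadratic terms are where the equi-integrability of the second stage pays off: the tensor product $\mathcal{C}^{\hslash}(t,\mathrm{n})\,\mathcal{C}^{\hslash}(t,z)$ of an equi-integrable weakly convergent family converges weakly in $L^1$ of the product domain, and pairing this with the strong convergence $\mathbb{K}^{\hslash}\to\mathbb{K}$, $\mathbb{B}^{\hslash}\to\mathbb{B}$ from (\ref{chap2:function_aggregatediscrete}--\ref{chap2:function_brkdiscrete}) (the kernels being bounded on the truncated domain, so that weak $\times$ strong passage is legitimate) and the boundedness of $\varphi$, one identifies the limits with the integrals in (\ref{convergence0}). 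Some care is still needed with the midpoint and $p_j^a$ discretization of the inner $\mathbb{B}$-integral and with the truncation at $\mathfrak{R}$, but these contribute consistency errors that vanish as $\hslash\to0$.
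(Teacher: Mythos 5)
Your proposal follows essentially the same route as the paper: nonnegativity and the mass/$L^1$ bounds by induction and a discrete Gronwall argument (the paper carries this out case by case for the four pieces of the kernel in (\ref{Collisional func}), which is where the constant $S(\mathfrak{T},\mathfrak{R})$ comes from), equi-integrability via a De la Vall\'ee--Poussin convex function propagated through the scheme and Dunford--Pettis compactness, and finally summation by parts in $n$ plus the weak--strong convergence lemma to identify the limits in (\ref{convergence0}). The only structural difference is your added time-equicontinuity/Arzel\`a--Ascoli stage; the paper omits it and works directly with weak compactness in $L^1(]0,\mathfrak{T}[\times ]0,\mathfrak{R}[)$, so your version is, if anything, slightly more complete with respect to the $L^\infty([0,\mathfrak{T}];L^1)$ mode of convergence claimed in the statement.
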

					The main goal, based on the preceding theorem, is to show the weak convergence of the family of functions 
					$(\mathcal{C}^{\hslash})$ to 
					$\mathcal{C}$ in 
				$L^1]0,\mathfrak{R}[$ as $\hslash$ and $\Delta t$ approach zero, using the Dunford-Pettis theorem to establish compactness under weak convergence.

	\begin{thm}\label{maintheorem1}
	Assuming a sequence $\mathcal{C}^{\hslash}:\Omega\mapsto \mathbb{R} $ in $L^1(\Omega)$ with $|\Omega|<\infty$ and satisfies the following properties
	\begin{enumerate}
	\item Equiboundedness of $\{\mathcal{C}^{\hslash}\}$, i.e.,
		\begin{align}\label{equiboundedness}
	\sup \|\mathcal{C}^{\hslash}\|_{L^1(\Omega)}< \infty
	\end{align}
	\item Equiintegrability of $\{\mathcal{C}^{\hslash}\}$, iff
	\begin{align}\label{equiintegrable}
	\int_\Omega \psi(|\mathcal{C}^{\hslash}|)d\mathrm{m}< \infty,
	\end{align}
	where  $\psi:[0,\infty[\mapsto [0,\infty[$ is a increasing function with
	\begin{align*}
	\lim_{\nu\rightarrow \infty}\frac{\psi(\nu)}{\nu}\rightarrow
								\infty.
		\end{align*}
			\end{enumerate}
	Then $\mathcal{C}^{\hslash}$  weakly sequencly compact set in $L^1(\Omega)$, see \cite{laurenccot2002continuous}.
	\end{thm}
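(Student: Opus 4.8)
The plan is to recognise Theorem~\ref{maintheorem1} as the conjunction of two classical facts: the de la Vall\'ee Poussin criterion, which upgrades the superlinear integral bound (\ref{equiintegrable}) into uniform integrability of the family $\{\mathcal{C}^{\hslash}\}$, and the Dunford--Pettis theorem, which characterises the relatively weakly compact subsets of $L^1(\Omega)$ (for the finite measure space $\Omega$) as exactly the bounded, uniformly integrable ones. Accordingly I would split the argument into two steps: first verify uniform integrability from hypotheses (\ref{equiboundedness})--(\ref{equiintegrable}), then invoke Dunford--Pettis to produce the weakly convergent subsequence.

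For the first step, set $C := \sup_{\hslash}\int_\Omega \psi(|\mathcal{C}^{\hslash}|)\,d\mathrm{m}$, which is finite by (\ref{equiintegrable}). Superlinearity $\psi(\nu)/\nu\to\infty$ means that for each $\varepsilon>0$ there is a threshold $M_\varepsilon$ with $\nu\le\varepsilon\,\psi(\nu)$ whenever $\nu\ge M_\varepsilon$. Splitting an arbitrary measurable set $A\subset\Omega$ according to whether $|\mathcal{C}^{\hslash}|$ exceeds $M_\varepsilon$ then gives the bound
\[
\int_A |\mathcal{C}^{\hslash}|\,d\mathrm{m}\le M_\varepsilon\,|A|+\varepsilon\int_\Omega \psi(|\mathcal{C}^{\hslash}|)\,d\mathrm{m}\le M_\varepsilon\,|A|+\varepsilon\,C,
\]
uniformly in $\hslash$. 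Choosing $|A|<\delta:=\varepsilon/M_\varepsilon$ pushes the right-hand side below $\varepsilon(1+C)$, which is exactly uniform integrability; taking $A=\Omega$ (or invoking (\ref{equiboundedness}) directly) supplies the accompanying $L^1$ bound.

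With both properties in hand I would apply the Dunford--Pettis theorem to conclude that $\{\mathcal{C}^{\hslash}\}$ is relatively weakly compact in $L^1(\Omega)$, and then the Eberlein--\v{S}mulian theorem to pass from weak compactness to weak \emph{sequential} compactness, so that some subsequence converges weakly in $L^1(\Omega)$.

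The genuinely deep ingredient is the Dunford--Pettis theorem itself, whose proof rests on the functional-analytic characterisation of relatively weakly compact sets in $L^1$ together with Eberlein--\v{S}mulian; I would quote it from \cite{laurenccot2002continuous} rather than reprove it. Modulo that citation the only real work is the de la Vall\'ee Poussin implication above, which is elementary once $M_\varepsilon$ is selected correctly. The single point that demands care is reading hypothesis (\ref{equiintegrable}) as a bound \emph{uniform} in $\hslash$ (the finiteness of $C$); a merely pointwise-in-$\hslash$ finiteness would fail to control the tails of all the $\mathcal{C}^{\hslash}$ simultaneously, and the splitting estimate would break down.
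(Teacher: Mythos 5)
Your proposal is correct, but it is worth noting that the paper does not actually prove this statement: Theorem 3.2 is quoted as a known fact (with a citation to Laurençot--Mischler) and the surrounding text is devoted instead to \emph{verifying its hypotheses} for the particular family $\mathcal{C}^{\hslash}$ generated by the scheme (Propositions 3.3 and 3.5). What you supply is the standard self-contained justification of the cited criterion: the de la Vallée Poussin splitting $\int_A|\mathcal{C}^{\hslash}|\,d\mathrm{m}\le M_\varepsilon|A|+\varepsilon\int_\Omega\psi(|\mathcal{C}^{\hslash}|)\,d\mathrm{m}$ to convert the superlinear moment bound into uniform integrability, followed by Dunford--Pettis and Eberlein--\v{S}mulian to pass from relative weak compactness to weak sequential compactness. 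This is exactly the argument the paper implicitly relies on, so there is no mathematical divergence, only a difference in what is taken for granted. Your closing remark is the genuinely valuable observation: as written, the hypothesis $\int_\Omega\psi(|\mathcal{C}^{\hslash}|)\,d\mathrm{m}<\infty$ is only a pointwise-in-$\hslash$ statement, and the conclusion requires the supremum over $\hslash$ of these integrals to be finite (which is what the paper's Proposition 3.5 actually establishes, via the uniform bound $A^{n}\mathcal{I}+B(A^{n}-1)/(A-1)$). Without that uniformity the splitting estimate gives no common $\delta$ and the theorem is false, so the statement as printed is imprecise and your reading is the correct one.
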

In order to prove Theorem \ref{maintheorem1}, firstly, discuss the non-negativity and equiboundedness of $\mathcal{C}^{\hslash}$ in $L^1$ and define the midpoint approximation of $m$ as follows:
		\begin{align*}
			X^\hslash:\mathrm{m}\in (0,R)\rightarrow
			X^\hslash(\mathrm{m})=\sum_{a=1}^{\mathfrak{I}}\mathrm{m}_a\chi_{\wp_a}(\mathrm{m}).
			\end{align*}
	
\begin{propos}
{Let us assume that time step satisfy condition (\ref{22}) and growth condition of kernels hold (\ref{breakage funcn}-\ref{Collisional func}). Then the non-negative family of $\mathcal{C}^{\hslash}$ fulfills
\begin{align}
\int_0^\mathfrak{R} X^\hslash(\mathrm{m})\mathcal{C}^{\hslash}(t,\mathrm{m})d\mathrm{m} \leq \int_0^\mathfrak{R} X^\hslash(\mathrm{m})\mathcal{C}^{\hslash}(s,\mathrm{m})d\mathrm{m} \leq \int_0^\mathfrak{R} X^\hslash(\mathrm{m})\mathcal{C}^{\hslash}(0,\mathrm{m})d\mathrm{m}=:M_1^{in},
\end{align}
where $ 0\le s\le t\le \mathfrak{T},$ and 
\begin{align}\label{36}
\int_0^\mathfrak{R} \mathcal{C}^{\hslash}(t,\mathrm{m})d\mathrm{m} \leq  \|\mathcal{C}^{in}\|_{L^1}\,e^{2\lambda \mathfrak{R} \|\mathbb{B}\|_{L^{\infty}} M_{1}^{in} t}.
\end{align}
}
\end{propos}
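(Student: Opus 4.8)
The plan is to establish all three assertions --- non-negativity of $\mathcal{C}^{\hslash}$, the monotone first-moment bound, and the exponential $L^1$ bound (\ref{36}) --- by a single induction on the time level $n$, since they are genuinely interlocked: verifying the time-step condition that keeps $\mathcal{C}^{n+1}$ non-negative needs the moment bounds at level $n$, whereas discarding the (non-positive) death contribution in the $L^1$ estimate at level $n+1$ needs the non-negativity just produced, and the sharp growth rate in (\ref{36}) needs the first-moment bound $\mathcal{M}_1^n:=\sum_a \mathrm{m}_a\mathcal{C}_a^n\Delta\mathrm{m}_a\le M_1^{in}$. The convenient starting point is to read (\ref{fully}) in the incremental form $\mathcal{C}_a^{n+1}=\mathcal{C}_a^n\big(1-\Delta t\sum_{j=1}^{\mathfrak{I}}\mathbb{K}_{a,j}\mathcal{C}_j^n\Delta\mathrm{m}_j\big)+\Delta t\,\mathcal{G}_a^n$, where the gain term $\mathcal{G}_a^n$ is manifestly non-negative because $\mathbb{K},\mathbb{B}\ge0$.

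For non-negativity, assuming $\mathcal{C}^n\ge0$, it suffices to keep the multiplier of $\mathcal{C}_a^n$ in $[0,1]$. From the explicit form (\ref{Collisional func}) one checks the global growth bound $\mathbb{K}(\mathrm{m},\mathrm{n})\le\lambda(\mathrm{m}+\mathrm{n})$: on the three regions meeting $]0,1[$ it is immediate since the relevant factor is $\le1$, and on $]1,\infty[\times]1,\infty[$ it follows from the weighted arithmetic--geometric mean inequality together with $\zeta+\eta\le1$ and $\mathrm{m}+\mathrm{n}\ge2$. Averaging over $\wp_a\times\wp_j$ and inserting the level-$n$ induction bounds $N_0^n:=\sum_a\mathcal{C}_a^n\Delta\mathrm{m}_a\le\|\mathcal{C}^{in}\|_{L^1}e^{2\lambda\mathfrak{R}\|\mathbb{B}\|_{L^\infty}M_1^{in}\mathfrak{T}}$ and $\mathcal{M}_1^n\le M_1^{in}$ delivers the uniform estimate $\sum_j\mathbb{K}_{a,j}\mathcal{C}_j^n\Delta\mathrm{m}_j\le S(\mathfrak{T},\mathfrak{R})$, with $S$ exactly as in (\ref{23}). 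Condition (\ref{22}) then confines the multiplier to $[1-\theta,1]$, so $\mathcal{C}_a^{n+1}\ge0$.

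For the first moment, multiply (\ref{fully}) by $\mathrm{m}_a\Delta\mathrm{m}_a$, sum over $a$, and interchange the order of summation in the gain term to collect, for each colliding pair $(j,l)$, the coefficient $\Theta_{j,l}:=\sum_{a=1}^{j}\mathrm{m}_a\int_{\mathrm{m}_{a-1/2}}^{p_j^a}\mathbb{B}(\mathrm{m},\mathrm{m}_j,\mathrm{m}_l)\,d\mathrm{m}$, while the loss term contributes $-\mathrm{m}_j$ for that pair. This gives $\mathcal{M}_1^{n+1}-\mathcal{M}_1^n=\Delta t\sum_{j,l}\mathbb{K}_{j,l}\mathcal{C}_j^n\mathcal{C}_l^n\Delta\mathrm{m}_j\Delta\mathrm{m}_l\,(\Theta_{j,l}-\mathrm{m}_j)$, so the whole bound reduces to the cell-wise inequality $\Theta_{j,l}\le\mathrm{m}_j$, to be weighed against the normalization $\int_0^{\mathrm{m}_j}\mathrm{m}\mathbb{B}(\mathrm{m},\mathrm{m}_j,\mathrm{m}_l)\,d\mathrm{m}=\mathrm{m}_j$ from (\ref{breakagefunc}). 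I expect this to be the main obstacle. The full cells $a<j$ carry the representative weight $\mathrm{m}_a$, while the diagonal cell $a=j$ is integrated only up to the pivot $p_j^a=\mathrm{m}_j$ --- the truncation where the non-conservative nature of the scheme is felt --- and one must show that these contributions together never manufacture more first moment than the parent supplies. The delicacy is that the midpoint representative reproduces $\int_{\wp_a}\mathrm{m}\,d\mathrm{m}$ exactly against a constant density but not against a varying $\mathbb{B}\in L^\infty_{\mathrm{loc}}$, so $\Theta_{j,l}\le\mathrm{m}_j$ cannot be read off term by term and has to be extracted from the combined effect of the representative points and the diagonal truncation.

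Granting $\mathcal{M}_1^{n+1}\le\mathcal{M}_1^n$, the $L^1$ bound follows by summing (\ref{fully}) against $\Delta\mathrm{m}_a$: the death term is non-positive and is discarded using the non-negativity already obtained, while the gain term telescopes in $a$ to $\sum_{a\le j}\int_{\mathrm{m}_{a-1/2}}^{p_j^a}\mathbb{B}\,d\mathrm{m}=\int_0^{\mathrm{m}_j}\mathbb{B}(\mathrm{m},\mathrm{m}_j,\mathrm{m}_l)\,d\mathrm{m}\le\mathfrak{R}\|\mathbb{B}\|_{L^\infty}$. With $\mathbb{K}(\mathrm{m},\mathrm{n})\le\lambda(\mathrm{m}+\mathrm{n})$ one gets $\sum_{j,l}\mathbb{K}_{j,l}\mathcal{C}_j^n\mathcal{C}_l^n\Delta\mathrm{m}_j\Delta\mathrm{m}_l\le2\lambda\mathcal{M}_1^nN_0^n\le2\lambda M_1^{in}N_0^n$, whence $N_0^{n+1}\le N_0^n\big(1+2\lambda\mathfrak{R}\|\mathbb{B}\|_{L^\infty}M_1^{in}\Delta t\big)$. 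Iterating and using $1+x\le e^x$ produces the exponential estimate (\ref{36}); finally, rewriting $\mathcal{M}_1^n$ and $N_0^n$ as the integrals $\int_0^{\mathfrak{R}}X^\hslash\mathcal{C}^\hslash\,d\mathrm{m}$ and $\int_0^{\mathfrak{R}}\mathcal{C}^\hslash\,d\mathrm{m}$ through the definition (\ref{chap2:function_ch}), and reading the monotonicity across levels $s\le t$, yields both displayed chains of inequalities.
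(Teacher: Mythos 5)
Your overall architecture coincides with the paper's: a single induction on the time level in which non-negativity, the first-moment bound and the $L^1$ bound feed one another, non-negativity obtained by keeping the multiplier $1-\Delta t\sum_j\mathbb{K}_{a,j}\mathcal{C}_j^n\Delta\mathrm{m}_j$ in $[0,1]$ via (\ref{22})--(\ref{23}), and the $L^1$ estimate obtained by discarding the death term and iterating $1+x\le e^x$. Your unified bound $\mathbb{K}(\mathrm{m},\mathrm{n})\le\lambda(\mathrm{m}+\mathrm{n})$ is a tidier packaging of the paper's four-case analysis and is correct on every region of (\ref{Collisional func}).

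However, there is a genuine gap: you explicitly decline to prove the cell-wise inequality $\Theta_{j,l}\le\mathrm{m}_j$ and then write ``granting $\mathcal{M}_1^{n+1}\le\mathcal{M}_1^n$, \dots''. This step cannot be granted, because the whole induction hinges on it --- both the verification that $S(\mathfrak{T},\mathfrak{R})\Delta t\le\theta$ controls the multiplier at level $n+1$ and the constant $2\lambda\mathfrak{R}\|\mathbb{B}\|_{L^\infty}M_1^{in}$ in (\ref{36}) use $\mathcal{M}_1^n\le M_1^{in}$, so leaving it open breaks the chain. The paper closes it in three moves: first it enlarges the upper limit $p_j^a$ to $\mathrm{m}_{a+1/2}$ (legitimate since $\mathbb{B}\ge0$), then it applies the midpoint quadrature to write $\int_{\mathrm{m}_{a-1/2}}^{\mathrm{m}_{a+1/2}}\mathbb{B}(\mathrm{m},\mathrm{m}_j,\mathrm{m}_l)\,d\mathrm{m}=\mathbb{B}(\mathrm{m}_a,\mathrm{m}_j,\mathrm{m}_l)\Delta\mathrm{m}_a$, and finally it invokes the discrete analogue of the normalization (\ref{breakagefunc}), namely $\sum_{a=1}^{j}\mathrm{m}_a\mathbb{B}(\mathrm{m}_a,\mathrm{m}_j,\mathrm{m}_l)\Delta\mathrm{m}_a=\mathrm{m}_j$, which is exactly the midpoint-rule evaluation of $\int_0^{\mathrm{m}_j}\mathrm{m}\,\mathbb{B}(\mathrm{m},\mathrm{m}_j,\mathrm{m}_l)\,d\mathrm{m}=\mathrm{m}_j$. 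In other words, the consistency of the discretized breakage function with mass conservation is taken as built into the scheme, and the ``approximate mass loss'' $\mathcal{M}_1^{n+1}\le\mathcal{M}_1^n$ follows at once. Your scruple that the midpoint rule is only exact against a constant density is not unreasonable as a criticism of the paper, but a proof of the proposition must either adopt the paper's convention (treat the discrete normalization as exact) or supply an alternative argument; stating the obstacle without resolving it leaves the proposal incomplete at its central point.
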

\begin{proof}

Mathematical induction is used to demonstrate the non-negativity of family $\mathcal{C}^{\hslash}$. At $t = 0$, it is known that $\mathcal{C}^{\hslash}(0) \geq 0$ belongs to $L^1]0,\mathfrak{R}[$. Let us assume that $\mathcal{C}^{\hslash}(t^n)\geq 0 $ and
 	\begin{align}\label{mon1}
 	\int_0^{\mathfrak{R}} \mathcal{C}^{\hslash}(t^n,\mathrm{m})d\mathrm{m} \le  \|\mathcal{C}^{in}\|_{L^1}\,e^{2\lambda \mathfrak{R} \|\mathbb{B}\|_{L^{\infty}} M_{1}^{in} t^n}.
 	\end{align}
 	Now to justify $\mathcal{C}^{\hslash}(t^{n+1})\geq 0$, start the indexing from $a = 1$. As a result, in this situation, we obtain from Eq.(\ref{fully}),
 	\begin{align}\label{non}
 	\mathcal{C}_{1}^{n+1}=& \mathcal{C}_{1}^{n}+\frac{\Delta t}{\Delta \mathrm{m}_{1}}\sum_{l=1}^{\mathfrak{I}}\sum_{j=1}^{\mathfrak{I}}\mathbb{K}_{j,l}\mathcal{C}_{j}^{n}\mathcal{C}_{l}^{n}\Delta \mathrm{m}_{j}\Delta \mathrm{m}_{l}\int_{\mathrm{m}_{1/2}}^{p_{j}^{1}}\mathbb{B}(\mathrm{m},\mathrm{m}_{j},\mathrm{m}_{l})\,d\mathrm{m}  
 	-\Delta t \sum_{j=1}^{\mathfrak{I}}\mathbb{K}_{1,j}\mathcal{C}_{1}^{n}\mathcal{C}_{j}^{n}\Delta \mathrm{m}_{j} \nonumber \\
 \geq 	& \mathcal{C}_{1}^{n}-\Delta t \sum_{j=1}^{\mathfrak{I}}\mathbb{K}_{1,j}\mathcal{C}_{1}^{n}\mathcal{C}_{j}^{n}\Delta \mathrm{m}_{j}.
 	\end{align}
	Moving further, we choose the first case for collisional kernel, case-(1):
	 $ \mathbb{K}(\mathrm{m},\mathrm{n})= \lambda(\mathrm{m}^{\zeta}\mathrm{n}^{\eta}+\mathrm{m}^{\eta}\mathrm{n}^{\zeta}),\, \text{when}  \,\, (\mathrm{m},\mathrm{n})\in ]1,\mathfrak{R}[\times ]1,\mathfrak{R}[.$ Thus, 
	 \begin{align*}
	 \mathcal{C}_{1}^{n+1}\geq  \mathcal{C}_{1}^{n}-\Delta t \sum_{j=1}^{\mathfrak{I}}\lambda(\mathrm{m}_{1}^{\zeta}\mathrm{m}_{j}^{\eta}+\mathrm{m}_{1}^{\eta}\mathrm{m}_{j}^{\zeta})\mathcal{C}_{1}^{n}\mathcal{C}_{j}^{n}\Delta \mathrm{m}_{j},
	 \end{align*}
	 using the fact that $\lambda(\mathrm{m}_{a}^{\zeta}\mathrm{m}_{j}^{\eta}+\mathrm{m}_{a}^{\eta}\mathrm{m}_{j}^{\zeta})\leq \lambda(\mathrm{m}_{a}+\mathrm{m}_{j})$, thanks to Young's inequality, we convert the above inequality into the following one
	 \begin{align}\label{non1}
	 \mathcal{C}_{1}^{n+1}&\geq  \,\mathcal{C}_{1}^{n}-\lambda\Delta t \sum_{j=1}^{\mathfrak{I}}(\mathrm{m}_{1}+\mathrm{m}_{j})\mathcal{C}_{1}^{n}\mathcal{C}_{j}^{n}\Delta \mathrm{m}_{j} \nonumber \\
	 &\geq [1-\lambda\Delta t(\mathfrak{R}\sum_{j=1}^{\mathfrak{I}}\mathcal{C}_{j}^{n}\Delta \mathrm{m}_{j}+ M_{1}^{in})]\mathcal{C}_{1}^{n}.
	  \end{align} 	
	  Now, consider case-(2): $\mathbb{K}(\mathrm{m},\mathrm{n})=\lambda \mathrm{m}^{-\alpha}\mathrm{n}, \, \text{when}\,\, (\mathrm{m},\mathrm{n})\in ]1,\mathfrak{R}[\times ]0,1[$. Putting this value in Eq.(\ref{non}) and then imposing the condition $\mathrm{m}^{-\alpha}\leq 1$ yield
	  \begin{align}
	   \mathcal{C}_{1}^{n+1}&\geq  \,\mathcal{C}_{1}^{n}-{\lambda}\Delta t \sum_{j=1}^{\mathfrak{I}}\mathrm{m}_{j}\mathcal{C}_{1}^{n}\mathcal{C}_{j}^{n}\Delta \mathrm{m}_{j}\nonumber  \\ 
	   & \geq (1-\lambda\Delta t M_{1}^{in})\mathcal{C}_{1}^{n}.
	  \end{align}
For case-(3): $\mathbb{K}(\mathrm{m},\mathrm{n})=\lambda \mathrm{m}\mathrm{n}^{-\alpha}, \, \text{when}\,\, (\mathrm{m},\mathrm{n})\in ]0,1[\times ]1,\mathfrak{R}[$ with $\mathrm{n}^{-\alpha}\leq 1$ and for case-(4): $\mathbb{K}(\mathrm{m},\mathrm{n})=\lambda (\mathrm{m}\mathrm{n}), \, \text{when}\,\, (\mathrm{m},\mathrm{n})\in ]0,1[\times ]0,1[$ provide
\begin{align}
\mathcal{C}_{1}^{n+1}\geq (1-\lambda\Delta t \sum_{j=1}^{\mathfrak{I}}\mathcal{C}_{j}^{n}\Delta \mathrm{m}_{j})\mathcal{C}_{1}^{n}.
\end{align}

All the results from case(1)-case(4) are collected, and the following inequality is achieved
\begin{align}
\mathcal{C}_{1}^{n+1}\geq [1-\lambda \Delta t(\mathfrak{R}\sum_{j=1}^{\mathfrak{I}}\mathcal{C}_{j}^{n}\Delta \mathrm{m}_{j}+M_{1}^{in})]\mathcal{C}_{1}^{n}.  
\end{align}
Using conditions (\ref{22}), (\ref{23}) and Eq.(\ref{mon1}), the non-negativity of $\mathcal{C}_{1}^{n+1}$ is obtained. Thus, we assume that the computations for $a\geq 2$ go similar to $a=1$ for all four cases and  the results are obtained like the previous ones. Then, applying the stability condition (\ref{22}) and the $L^1$ bound on $\mathcal{C}^{\hslash}$  yield $\mathcal{C}^{\hslash}({t^{n+1}})\geq 0.$\\
Further, by summing (\ref{fully}) over $a$, the following time monotonicity result is obtained for the total mass as  	
  	\begin{align}\label{Massloss}
  	\sum_{a=0}^{\mathfrak{I}}\Delta {\mathrm{m}_a} \mathrm{m}_a \mathcal{C}_a^{n+1} =& \sum_{a=0}^{\mathfrak{I}}\Delta \mathrm{m}_a \mathrm{m}_a \mathcal{C}_a^{n}+\Delta t\sum_{a=1}^{\mathfrak{I}}\sum_{l=1}^{\mathfrak{I}}\sum_{j=a}^{\mathfrak{I}}\mathrm{m}_a\mathbb{K}_{j,l}\mathcal{C}_{j}^{n}\mathcal{C}_{l}^{n}\Delta \mathrm{m}_{j}\Delta \mathrm{m}_{l}\int_{\mathrm{m}_{a-1/2}}^{p_{j}^{a}}\mathbb{B}(\mathrm{m},\mathrm{m}_{j},\mathrm{m}_{l})\,d\mathrm{m}  \nonumber \\	
  	&-\Delta t \sum_{a=1}^{\mathfrak{I}}\sum_{j=1}^{\mathfrak{I}}
  	\mathrm{m}_a \mathbb{K}_{a,j}\mathcal{C}_{a}^{n}\mathcal{C}_{j}^{n}\Delta \mathrm{m}_{j}\Delta \mathrm{m}_{a}.
  	\end{align}
  	Rearranging the summation's order and using the upper limit of $p_{j}^{a}$ and Eq.(\ref{breakagefunc}) help the second term of the  Eq.(\ref{Massloss}) on the right-hand side (RHS) yields
  	\begin{align}\label{Massloss1}
  	\Delta t\sum_{a=1}^{\mathfrak{I}}\sum_{l=1}^{\mathfrak{I}}\sum_{j=a}^{\mathfrak{I}}\mathrm{m}_a\mathbb{K}_{j,l}\mathcal{C}_{j}^{n}\mathcal{C}_{l}^{n}\Delta \mathrm{m}_{j}\Delta \mathrm{m}_{l}\int_{\mathrm{m}_{a-1/2}}^{p_{j}^{a}}\mathbb{B}(\mathrm{m},\mathrm{m}_{j},\mathrm{m}_{l})\,d\mathrm{m} \nonumber \\
  	\leq 	\Delta t\sum_{l=1}^{\mathfrak{I}}\sum_{j=1}^{\mathfrak{I}}\mathbb{K}_{j,l}\mathcal{C}_{j}^{n}\mathcal{C}_{l}^{n}\Delta \mathrm{m}_{j}\Delta \mathrm{m}_{l}\sum_{a=1}^{j}\mathrm{m}_a\int_{\mathrm{m}_{a-1/2}}^{\mathrm{m}_{a+1/2}}\mathbb{B}(\mathrm{m},\mathrm{m}_{j},\mathrm{m}_{l})\,d\mathrm{m} \nonumber \\
  	=	\Delta t\sum_{l=1}^{\mathfrak{I}}\sum_{j=1}^{\mathfrak{I}}\mathbb{K}_{j,l}\mathcal{C}_{j}^{n}\mathcal{C}_{l}^{n}\Delta \mathrm{m}_{j}\Delta \mathrm{m}_{l}\sum_{a=1}^{j}\mathrm{m}_a\mathbb{B}(\mathrm{m}_a,\mathrm{m}_{j},\mathrm{m}_{l})\Delta \mathrm{m}_{a} 	= \Delta t\sum_{l=1}^{\mathfrak{I}}\sum_{j=1}^{\mathfrak{I}}\mathrm{m}_j\mathbb{K}_{j,l}\mathcal{C}_{j}^{n}\mathcal{C}_{l}^{n}\Delta \mathrm{m}_{j}\Delta \mathrm{m}_{l}.
  	\end{align}
  Eqs.(\ref{Massloss}) and (\ref{Massloss1}) indicate about the approximate mass loss property
  \begin{align}
  	\sum_{a=0}^{\mathfrak{I}}\Delta {\mathrm{m}_a} \mathrm{m}_a \mathcal{C}_a^{n+1}  \leq  \sum_{a=0}^{\mathfrak{I}}\Delta \mathrm{m}_a \mathrm{m}_a \mathcal{C}_a^{n}\leq M_{1}^{in}.
  \end{align}	
  It is then demonstrated that 
 $\mathcal{C}^{\hslash}(t^{n+1})$ satisfies a similar estimate to (\ref{mon1}). This is done by multiplying  (\ref{fully}) by  $\Delta \mathrm{m}_i$, excluding the negative term, and summing over $a$, as
\begin{align}\label{equi1}
\sum_{a=1}^{\mathfrak{I}}\mathcal{C}_{a}^{n+1}\Delta \mathrm{m}_{a} &\leq  \sum_{a=1}^{\mathfrak{I}}\mathcal{C}_{a}^{n}\Delta \mathrm{m}_{a}+\Delta t\sum_{a=1}^{\mathfrak{I}}\sum_{l=1}^{\mathfrak{I}}\sum_{j=a}^{\mathfrak{I}}\mathbb{K}_{j,l}\mathcal{C}_{j}^{n}\mathcal{C}_{l}^{n}\Delta \mathrm{m}_{j}\Delta \mathrm{m}_{l}\int_{\mathrm{m}_{a-1/2}}^{p_{j}^{a}}\mathbb{B}(\mathrm{m},\mathrm{m}_{j},\mathrm{m}_{l})\,d\mathrm{m} \nonumber \\
& \leq \sum_{a=1}^{\mathfrak{I}}\mathcal{C}_{a}^{n}\Delta \mathrm{m}_{a}+
\Delta t\|\mathbb{B}\|_{\infty}\sum_{a=1}^{\mathfrak{I}}\sum_{l=1}^{\mathfrak{I}}\sum_{j=1}^{\mathfrak{I}}\mathbb{K}_{j,l}\mathcal{C}_{j}^{n}\mathcal{C}_{l}^{n}\Delta \mathrm{m}_{j}\Delta \mathrm{m}_{l}\int_{\mathrm{m}_{a-1/2}}^{\mathrm{m}_{a+1/2}}\,d\mathrm{m} \nonumber \\
& \leq \sum_{a=1}^{\mathfrak{I}}\mathcal{C}_{a}^{n}\Delta \mathrm{m}_{a}+
\Delta t \mathfrak{R} \|\mathbb{B}\|_{\infty}\sum_{l=1}^{\mathfrak{I}}\sum_{j=1}^{\mathfrak{I}}\mathbb{K}_{j,l}\mathcal{C}_{j}^{n}\mathcal{C}_{l}^{n}\Delta \mathrm{m}_{j}\Delta \mathrm{m}_{l}.
\end{align}
Again, the above will be simplified for four cases of kernels:\\
Case-(1): $ \mathbb{K}(\mathrm{m},\mathrm{n})= \lambda(\mathrm{m}^{\zeta}\mathrm{n}^{\eta}+\mathrm{m}^{\eta}\mathrm{n}^{\zeta}),\, \text{when}  \,\, (\mathrm{m},\mathrm{n})\in ]1,\mathfrak{R}[\times ]1,\mathfrak{R}[$. Substitute the value of $\mathbb{K}(\mathrm{m},\mathrm{n})$ in Eq.(\ref{equi1}) and use the Young's inequality to get  
\begin{align*}
\sum_{a=1}^{\mathfrak{I}}\mathcal{C}_{a}^{n+1}\Delta \mathrm{m}_{a} &\leq  \sum_{a=1}^{\mathfrak{I}}\mathcal{C}_{a}^{n}\Delta \mathrm{m}_{a} + {\lambda}\Delta t \mathfrak{R} \|\mathbb{B}\|_{\infty}\sum_{l=1}^{\mathfrak{I}}\sum_{j=1}^{\mathfrak{I}} (\mathrm{m}_{j}+\mathrm{m}_{l})\mathcal{C}_{j}^{n}\mathcal{C}_{l}^{n}\Delta \mathrm{m}_{j}\Delta \mathrm{m}_{l}\nonumber \\
& \leq (1+2{\lambda}\Delta t \mathfrak{R} \|\mathbb{B}\|_{\infty}M_{1}^{in})\sum_{a=1}^{\mathfrak{I}}\mathcal{C}_{a}^{n}\Delta \mathrm{m}_{a}.
\end{align*}
Finally, having (\ref{mon1}) the  $L^1$ bound of $\mathcal{C}^{\hslash}$ at time step $n$ and  $1+\mathrm{m} < \exp(\mathrm{m})$ $\forall$ $\mathrm{m}>0$ imply that
$$\sum_{a=1}^{\mathfrak{I}}\mathcal{C}_{a}^{n+1}\Delta \mathrm{m}_{a} \leq \|\mathcal{C}^{in}\|_{L^1}\,e^{2\lambda \mathfrak{R} \|\mathbb{B}\|_{L^{\infty}} M_{1}^{in} t^{n+1}}.$$
As a consequence, the result (\ref{36}) is accomplished.\\
Case-(2): $\mathbb{K}(\mathrm{m},\mathrm{n})=\lambda \mathrm{m}^{-\alpha}\mathrm{n}, \, \text{when}\,\, (\mathrm{m},\mathrm{n})\in ]1,\mathfrak{R}[\times ]0,1[$, and Case-(3): $\mathbb{K}(\mathrm{m},\mathrm{n})=\lambda \mathrm{m}\mathrm{n}^{-\alpha}, \, \text{when}\,\, (\mathrm{m},\mathrm{n})\in ]0,1[\times ]1,\mathfrak{R}[$ have  similar computations. The value of $\mathbb{K}(\mathrm{m},\mathrm{n})$ after  substituting in Eq.(\ref{equi1}) yields
\begin{align*}
\sum_{a=1}^{\mathfrak{I}}\mathcal{C}_{a}^{n+1}\Delta \mathrm{m}_{a} &\leq  \sum_{a=1}^{\mathfrak{I}}\mathcal{C}_{a}^{n}\Delta \mathrm{m}_{a} + {\lambda}\Delta t \mathfrak{R} \|\mathbb{B}\|_{\infty}\sum_{l=1}^{\mathfrak{I}}\sum_{j=1}^{\mathfrak{I}} (\mathrm{m}_{j}^{-\alpha}\mathrm{m}_{l})\mathcal{C}_{j}^{n}\mathcal{C}_{l}^{n}\Delta \mathrm{m}_{j}\Delta \mathrm{m}_{l}\\
&\leq \sum_{a=1}^{\mathfrak{I}}\mathcal{C}_{a}^{n}\Delta \mathrm{m}_{a} + {\lambda}\Delta t \mathfrak{R} \|\mathbb{B}\|_{\infty}\sum_{l=1}^{\mathfrak{I}}\sum_{j=1}^{\mathfrak{I}} \mathrm{m}_{l}\mathcal{C}_{j}^{n}\mathcal{C}_{l}^{n}\Delta \mathrm{m}_{j}\Delta \mathrm{m}_{l}\\
& \leq (1+\lambda\Delta t \mathfrak{R} \|\mathbb{B}\|_{\infty}M_{1}^{in})\sum_{a=1}^{\mathfrak{I}}\mathcal{C}_{a}^{n}\Delta \mathrm{m}_{a}.
\end{align*}
Again, using (\ref{mon1}) and $1+\mathrm{m} < \exp(\mathrm{m})$ $\forall$ $\mathrm{m}>0$  provide the $L^1$ bound for $\mathcal{C}^{\hslash}$ at time step $n+1$.

Case-(4): For  $\mathbb{K}(\mathrm{m},\mathrm{n})=\lambda (\mathrm{m}\mathrm{n}), \, \text{when}\,\, (\mathrm{m},\mathrm{n})\in ]0,1[\times ]0,1[$,  inserting the value of $\mathbb{K}$ in Eq.(\ref{equi1}) employs
\begin{align*}
\sum_{a=1}^{\mathfrak{I}}\mathcal{C}_{a}^{n+1}\Delta \mathrm{m}_{a} &\leq  \sum_{a=1}^{\mathfrak{I}}\mathcal{C}_{a}^{n}\Delta \mathrm{m}_{a} + {\lambda}\Delta t \mathfrak{R} \|\mathbb{B}\|_{\infty}\sum_{l=1}^{\mathfrak{I}}\sum_{j=1}^{\mathfrak{I}} \mathrm{m}_{j}\mathrm{m}_{l}\mathcal{C}_{j}^{n}\mathcal{C}_{l}^{n}\Delta \mathrm{m}_{j}\Delta \mathrm{m}_{l}\\
& \leq \sum_{a=1}^{\mathfrak{I}}\mathcal{C}_{a}^{n}\Delta \mathrm{m}_{a} + {\lambda}\Delta t \mathfrak{R} \|\mathbb{B}\|_{\infty}\sum_{l=1}^{\mathfrak{I}}\sum_{j=1}^{\mathfrak{I}} \mathrm{m}_{l}\mathcal{C}_{j}^{n}\mathcal{C}_{l}^{n}\Delta \mathrm{m}_{j}\Delta \mathrm{m}_{l}.
\end{align*}
 To get the result (\ref{36}) for $\mathcal{C}^{\hslash}(t^{n+1}),$ the computations are similar to the previous case.
\end{proof}
Now, the uniform integrability can be demonstrated by considering a class of convex functions as $C_{V P,\infty}$. A non-negative function $ \psi \in C_{V P,\infty}$,
 holds following conditions \cite{laurenccot2002continuous}:

	\begin{description}
	\item[(i)] $\psi(0)=0,\ \psi'(0)=1$ and $\psi'$ is concave;
	\item[(ii)] $\lim_{\nu \to \infty} \psi'(\nu) =\lim_{\nu \to \infty} \frac{ \psi(\nu)}{\nu}=\infty$;
	\item[(iii)] for $\theta \in ]1, 2[$,
	\begin{align}\label{Tproperty}
	\Pi_{\theta}(\psi):= \sup_{\nu \ge 0} \bigg\{   \frac{ \psi(\nu)}{\nu^{\theta}} \bigg\} < \infty.
	\end{align}
	\end{description}
	
It is given that $\mathcal{C}^{in}\in L^1\,]0,\mathfrak{R}[$. Thus, by the De la Vallée Poussin theorem, there exists a convex function $\mathrm{\psi}\geq 0$, continuously differentiable on $\mathbb{R}^{+}$, with $\mathrm{\psi}(0)=0$,$\mathrm{\psi}'(0)=1$, and $\mathrm{\psi}'$ concave
	$$\frac{\mathrm{\psi}(\nu)}{\nu} \rightarrow \infty,\ \ \text{as}\ \
	\nu \rightarrow \infty$$ and
	\begin{align}\label{convex}
	\mathcal{I}:=\int_0^{\mathfrak{R}} \mathrm{\psi}(\mathcal{C}^{in})(\mathrm{m})d\mathrm{m}< \infty.
	\end{align}
	\begin{lem} [\cite{laurenccot2002continuous}]\label{lemma}
	Consider $\mathrm{\psi}\in {C}_{VP, \infty}$. 
Then $\forall$  $(z_1,z_2)\in \mathbb{R}^{+}\times \mathbb{R}^{+},$
	$$z_1\mathrm{\psi}'(z_2)\leq \mathrm{\psi}(z_1)+\mathrm{\psi}(z_2).$$
	\end{lem}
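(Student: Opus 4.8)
The plan is to fix $z_2 \geq 0$ and read the asserted inequality as a statement in the single variable $z_1$, so that the convexity of $\psi$ collapses the whole claim to a one-point estimate depending only on $z_2$. The only structural facts I would use are precisely those built into the definition of $C_{VP,\infty}$: that $\psi$ is convex with $\psi(0)=0$, and that $\psi'$ is concave with $\psi'(0)=1$. Notably I will not need properties (ii) or (iii); they play no role in this pointwise inequality.

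First I would invoke the support-line (tangent) inequality for the convex function $\psi$: for the fixed $z_2$ and every $z_1 \geq 0$ one has $\psi(z_1) \geq \psi(z_2) + \psi'(z_2)(z_1 - z_2)$. Substituting this lower bound into the quantity $\psi(z_1) + \psi(z_2) - z_1\psi'(z_2)$, the terms linear in $z_1$ cancel and what remains is
\begin{align*}
\psi(z_1) + \psi(z_2) - z_1\psi'(z_2) \geq 2\psi(z_2) - z_2\psi'(z_2).
\end{align*}
Hence it suffices to establish the $z_1$-free inequality $z_2\psi'(z_2) \leq 2\psi(z_2)$, which I would treat next.

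This reduced inequality is exactly where the concavity of $\psi'$ must be used, and it is the heart of the argument. Writing $\psi(z_2) = \int_0^{z_2}\psi'(s)\,ds$ and bounding the concave integrand $\psi'$ from below by the chord joining $(0,\psi'(0))$ and $(z_2,\psi'(z_2))$, the resulting trapezoidal estimate gives
\begin{align*}
\psi(z_2) \geq \frac{z_2}{2}\bigl(\psi'(0) + \psi'(z_2)\bigr) = \frac{z_2}{2}\bigl(1 + \psi'(z_2)\bigr) \geq \frac{z_2}{2}\,\psi'(z_2),
\end{align*}
which is precisely $z_2\psi'(z_2) \leq 2\psi(z_2)$. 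Chaining this with the previous display yields $z_1\psi'(z_2) \leq \psi(z_1)+\psi(z_2)$ for all $(z_1,z_2)\in\mathbb{R}^+\times\mathbb{R}^+$, and the degenerate case $z_2=0$ is absorbed uniformly since the tangent-line step then reads $\psi(z_1)\geq z_1$.

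The main obstacle I anticipate is recognizing that the factor $2$ on the right-hand side is not automatic but must be extracted from the concavity of $\psi'$. The naive monotonicity bound $\psi(z_2)\leq z_2\psi'(z_2)$ points the wrong way, so one cannot close the estimate by convexity alone; it is only the chord/trapezoid inequality for the concave derivative that produces the sharper lower bound $\psi(z_2)\geq \tfrac{z_2}{2}\psi'(z_2)$ needed here. Once that observation is in hand, the remainder is a routine application of the convex support-line inequality and requires no further calculation.
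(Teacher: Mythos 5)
Your proof is correct. Note that the paper does not actually prove this lemma -- it is imported verbatim with a citation to the reference \cite{laurenccot2002continuous} -- so there is no in-paper argument to compare against; your two-step reduction (support-line inequality for the convex $\psi$ to cancel the $z_1$-terms, then the trapezoid/chord bound for the concave $\psi'$ with $\psi'(0)=1\geq 0$ to get $z_2\psi'(z_2)\leq 2\psi(z_2)$) is precisely the standard argument from that cited literature, and your handling of the degenerate case $z_2=0$ and your remark that the convexity bound $\psi(z_2)\leq z_2\psi'(z_2)$ points the wrong way are both accurate.
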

	At this point, we can demonstrate that the approximate solution $\mathcal{C}^\hslash$ is equiintegrable.
	\begin{propos}\label{equiintegrability}
	Assume that $\mathcal{C}^{in}\geq 0\in L^1 ]0,\mathfrak{R}[$ and (\ref{fully}) produces the family $(\mathcal{C}^{\hslash})$ for any $h$ and $\Delta t$, where $\Delta t$ holds (\ref{22}). Then $(\mathcal{C}^{\hslash})$ is weakly sequentially compact in $L^1(]0,\mathfrak{T}[\times ]0,\mathfrak{R}[)$.
						\end{propos}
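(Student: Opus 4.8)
The plan is to verify the two hypotheses of the Dunford--Pettis criterion recorded in Theorem~\ref{maintheorem1} on the bounded domain $\Omega = \,]0,\mathfrak{T}[\,\times\,]0,\mathfrak{R}[\,$, for which $|\Omega| = \mathfrak{T}\mathfrak{R} < \infty$. Equiboundedness is already in hand: integrating the uniform bound (\ref{36}) over $t\in[0,\mathfrak{T}]$ gives $\|\mathcal{C}^{\hslash}\|_{L^1(\Omega)} \le \mathfrak{T}\,\|\mathcal{C}^{in}\|_{L^1}\,e^{2\lambda\mathfrak{R}\|\mathbb{B}\|_{L^\infty}M_1^{in}\mathfrak{T}}$, which is independent of $\hslash$ and $\Delta t$, so (\ref{equiboundedness}) holds. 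The whole task therefore reduces to producing the superlinear function required by (\ref{equiintegrable}): I would take exactly the convex $\psi\in C_{VP,\infty}$ associated with the datum $\mathcal{C}^{in}$ introduced above, for which $\mathcal{I} = \int_0^{\mathfrak{R}}\psi(\mathcal{C}^{in})\,d\mathrm{m} < \infty$ by (\ref{convex}), and show that $t\mapsto \int_0^{\mathfrak{R}}\psi(\mathcal{C}^{\hslash}(t,\mathrm{m}))\,d\mathrm{m}$ stays bounded uniformly in $\hslash$, $\Delta t$ and $t\le\mathfrak{T}$.

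The heart of the argument is a one-step estimate for $E^n := \sum_{a=1}^{\mathfrak{I}}\psi(\mathcal{C}_a^n)\,\Delta\mathrm{m}_a$. Using the convexity inequality $\psi(\mathcal{C}_a^{n+1}) - \psi(\mathcal{C}_a^n) \le \psi'(\mathcal{C}_a^{n+1})(\mathcal{C}_a^{n+1}-\mathcal{C}_a^n)$ together with the scheme (\ref{fully}) and discarding the non-negative death contribution, which enters with a minus sign, I would bound $E^{n+1}-E^n$ by the birth part weighted with $\psi'$. There the integral of $\mathbb{B}$ is controlled by $\|\mathbb{B}\|_{L^\infty}\Delta\mathrm{m}_a$ (hypothesis $H1$) and each $\mathbb{K}_{j,l}$ by $2\lambda\mathfrak{R}$ on the truncated domain (the four cases of $H2$, exactly as in the preceding proposition via Young's inequality). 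The cubic structure $\psi'(\mathcal{C}_a^n)\,\mathcal{C}_j^n\,\mathcal{C}_l^n$ is then linearised by Lemma~\ref{lemma}, $\mathcal{C}_j^n\psi'(\mathcal{C}_a^n)\le \psi(\mathcal{C}_j^n)+\psi(\mathcal{C}_a^n)$, after which the remaining factor $\sum_l \mathcal{C}_l^n\Delta\mathrm{m}_l$ is absorbed by the $L^1$ bound (\ref{36}) while the ordered sums over $a\le j$ collapse against $\sum_a\Delta\mathrm{m}_a\le\mathfrak{R}$. This yields a discrete Gronwall inequality $E^{n+1}\le(1+C\Delta t)E^n$, with $C = C(\mathfrak{R},\mathfrak{T},\lambda,\|\mathbb{B}\|_{L^\infty},\|\mathcal{C}^{in}\|)$ independent of the mesh.

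To close, the initial value is handled by Jensen's inequality: since $\mathcal{C}_a^{in}$ is the average of $\mathcal{C}^{in}$ over $\wp_a$ and $\psi$ is convex, $\sum_a\psi(\mathcal{C}_a^{in})\Delta\mathrm{m}_a\le\int_0^{\mathfrak{R}}\psi(\mathcal{C}^{in})\,d\mathrm{m}=\mathcal{I}$. Iterating the one-step bound gives $E^n\le e^{C\mathfrak{T}}\mathcal{I}$ for all $n\le N$, hence $\int_0^{\mathfrak{R}}\psi(\mathcal{C}^{\hslash}(t,\cdot))\,d\mathrm{m}\le e^{C\mathfrak{T}}\mathcal{I}$ uniformly, and integrating in $t$ delivers (\ref{equiintegrable}) on $\Omega$. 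With both hypotheses of Theorem~\ref{maintheorem1} verified, $(\mathcal{C}^{\hslash})$ is weakly sequentially compact in $L^1(\Omega)$.

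I expect the genuine difficulty to lie in the explicit-Euler step, namely the appearance of $\psi'(\mathcal{C}_a^{n+1})$ rather than $\psi'(\mathcal{C}_a^n)$ in the convexity inequality. Writing the update as $\mathcal{C}_a^{n+1}=(1-\Delta t\,d_a^n)\mathcal{C}_a^n+\Delta t\,b_a^n$ with non-negative loss rate $d_a^n$ and birth rate $b_a^n$, the stability condition (\ref{22}) guarantees $1-\Delta t\,d_a^n\ge0$ and hence $\mathcal{C}_a^{n+1}\le\mathcal{C}_a^n+\Delta t\,b_a^n$. Since $\psi'$ is concave with $\psi'(0)=1$ it is subadditive, so $\psi'(\mathcal{C}_a^{n+1})\le\psi'(\mathcal{C}_a^n)+\psi'(\Delta t\,b_a^n)$; the first summand reproduces the semi-discrete estimate, while the correction $\psi'(\Delta t\,b_a^n)\,\Delta t\,b_a^n$ is of higher order in $\Delta t$ and is absorbed using property (iii), $\Pi_\theta(\psi)<\infty$ for some $\theta\in\,]1,2[$, which prevents $\psi$ from growing faster than quadratically and thereby tames the quadratic birth term. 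Keeping this correction subordinate to $C\Delta t\,E^n$ is the delicate point; the rest is the bookkeeping of the previous proposition with the weight $\psi$ in place of the identity.
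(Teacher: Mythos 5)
Your argument is correct and follows the same architecture as the paper's proof: equiboundedness from (\ref{36}), the De la Vall\'ee Poussin function $\psi$ of (\ref{convex}), the convexity inequality $\psi(\mathcal{C}_a^{n+1})-\psi(\mathcal{C}_a^n)\le\psi'(\mathcal{C}_a^{n+1})(\mathcal{C}_a^{n+1}-\mathcal{C}_a^n)$ combined with (\ref{fully}) after discarding the death term, a discrete Gronwall iteration, Jensen's inequality for the initial layer, and Dunford--Pettis to close. The one genuine divergence is where Lemma \ref{lemma} is applied. The paper keeps the factor $\mathbb{B}(\mathrm{m}_a,\mathrm{m}_j,\mathrm{m}_l)$ and linearises the pair $\bigl(\mathbb{B},\,\mathcal{C}_a^{n+1}\bigr)$, i.e.\ $\mathbb{B}\,\psi'(\mathcal{C}_a^{n+1})\le\psi(\mathcal{C}_a^{n+1})+\psi(\mathbb{B})$, which forces it to control $\psi(\mathbb{B})$ through property (\ref{Tproperty}) and the constant $\Pi_\theta(\psi)$ (the passage from (\ref{Equi2}) to (\ref{Equi3})); this is why the additive constant $B$ appears in its recursion $E^{n+1}\le AE^n+B$. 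You instead bound $\int_{\wp_a}\mathbb{B}\le\|\mathbb{B}\|_\infty\Delta\mathrm{m}_a$ first and linearise the pair $\bigl(\mathcal{C}_j^n,\,\mathcal{C}_a^{n+1}\bigr)$, so that only $\psi(\mathcal{C}_j^n)$ and $\psi(\mathcal{C}_a^{n+1})$ appear; this avoids $\Pi_\theta(\psi)$ entirely for the main estimate and yields a homogeneous recursion $(1-C\Delta t)E^{n+1}\le(1+C\Delta t)E^n$ with no additive term, which is arguably cleaner (note you do need the implicit form, since $\psi(\mathcal{C}_a^{n+1})$ lands on the right-hand side, and you need $\Delta t$ small enough that $1-C\Delta t>0$ --- the same smallness the paper tacitly assumes for its constant $A$). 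Both routes give $E^n\le e^{C\mathfrak{T}}\mathcal{I}$ (plus a bounded additive term in the paper's case) uniformly in $\hslash$, $\Delta t$ and $n$, hence (\ref{equiintegrable}).

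One remark: the difficulty you isolate in your final paragraph --- the appearance of $\psi'(\mathcal{C}_a^{n+1})$ rather than $\psi'(\mathcal{C}_a^n)$ --- is not actually a difficulty, and the subadditivity detour via $\psi'(\mathcal{C}_a^{n+1})\le\psi'(\mathcal{C}_a^n)+\psi'(\Delta t\,b_a^n)$ is unnecessary. Lemma \ref{lemma} is stated for arbitrary non-negative arguments, so applying it directly with $z_2=\mathcal{C}_a^{n+1}$ simply places $\psi(\mathcal{C}_a^{n+1})$, i.e.\ a multiple of $E^{n+1}$, on the right-hand side with a coefficient of order $\Delta t$; absorbing it into the left-hand side is exactly the implicit recursion you already wrote in your second paragraph (and is what the paper does). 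So the correction term you propose to tame with $\Pi_\theta(\psi)$ never needs to be introduced.
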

						
	\begin{proof}
	We shall establish an estimate for the family $\mathcal{C}^{\hslash}$, uniformly on $\hslash$, similar to (\ref{convex}), in order to demonstrate the equiintegrability. In this regard, define the integral of $\mathrm{\psi}(\mathcal{C}^{\hslash})$ as
		\begin{align*}
		\int_0^\mathfrak{T} \int_0^{\mathfrak{R}} \mathrm{\psi}(\mathcal{C}^{\hslash}(t,\mathrm{m}))d\mathrm{m}\,dt=&\sum_{n=0}^{N-1}\sum_{a=1}^{\mathfrak{I}}\int_{\tau_n}\int_{\wp_a}\mathrm{\psi}
		\bigg(\sum_{k=0}^{N-1}\sum_{j=1}^{\mathfrak{I}}\mathcal{C}_j^k\chi_{\wp_j}(\mathrm{m})\chi_{\tau_k}(t)\bigg)d\mathrm{m}\,dt\\
		=&\sum_{n=0}^{N-1}\sum_{a=1}^{\mathfrak{I}}\Delta t\Delta \mathrm{m}_a\mathrm{\psi}(\mathcal{C}_a^n).
		\end{align*}
	It follows from the discrete Eq.(\ref{fully}), as well as the convex property of  $\mathrm{\psi}$  and  ${\mathrm{\psi}}^{'}\geq 0$, that
	\begin{align}\label{Equi1}
\sum_{a=1}^{\mathfrak{I}} [\mathrm{\psi}(\mathcal{C}_a^{n+1})-\mathrm{\psi}(\mathcal{C}_a^{n})]\Delta \mathrm{m}_{a}& \leq \sum_{a=1}^{\mathfrak{I}}\left(\mathcal{C}_a^{n+1}-\mathcal{C}_a^{n}\right)\mathrm{\psi}^{'}(\mathcal{C}_a^{n+1})\Delta \mathrm{m}_{a} \nonumber\\
& \leq \Delta t\sum_{a=1}^{\mathfrak{I}}\sum_{l=1}^{\mathfrak{I}}\sum_{j=a}^{\mathfrak{I}}\mathbb{K}_{j,l}\mathcal{C}_{j}^{n}\mathcal{C}_{l}^{n}\mathrm{\psi}^{'}(\mathcal{C}_a^{n+1})\Delta \mathrm{m}_{j}\Delta \mathrm{m}_{l}\int_{\mathrm{m}_{a-1/2}}^{\mathrm{m}_{a+1/2}}\mathbb{B}(\mathrm{m},\mathrm{m}_{j},\mathrm{m}_{l})\,d\mathrm{m} \nonumber \\
&  \leq \Delta t\sum_{a=1}^{\mathfrak{I}}\sum_{l=1}^{\mathfrak{I}}\sum_{j=1}^{\mathfrak{I}}\mathbb{K}_{j,l}\mathcal{C}_{j}^{n}\mathcal{C}_{l}^{n}\mathrm{\psi}^{'}(\mathcal{C}_a^{n+1})\Delta \mathrm{m}_{j}\Delta \mathrm{m}_{l} \mathbb{B}(\mathrm{m}_{a},\mathrm{m}_{j},\mathrm{m}_{l})\Delta \mathrm{m}_{a}.
	\end{align}
Case-(1): $ \mathbb{K}(\mathrm{m},\mathrm{n})= \lambda(\mathrm{m}^{\zeta}\mathrm{n}^{\eta}+\mathrm{m}^{\eta}\mathrm{n}^{\zeta}),\, \text{when}  \,\, (\mathrm{m},\mathrm{n})\in ]1,\mathfrak{R}[\times ]1,\mathfrak{R}[$. Substitute the value of $\mathbb{K}(\mathrm{m},\mathrm{n})$ in Eq.(\ref{Equi1}) yields
\begin{align*}
\sum_{a=1}^{\mathfrak{I}} [\mathrm{\psi}(\mathcal{C}_a^{n+1})-\mathrm{\psi}(\mathcal{C}_a^{n})]\Delta \mathrm{m}_{a} \leq &  {\lambda} \Delta t\sum_{a=1}^{\mathfrak{I}}\sum_{l=1}^{\mathfrak{I}}\sum_{j=1}^{\mathfrak{I}}(\mathrm{m}_{j}+\mathrm{m}_{l})\mathcal{C}_{j}^{n}\Delta \mathrm{m}_{j}\mathcal{C}_{l}^{n}\Delta \mathrm{m}_{l}\Delta \mathrm{m}_{a} \mathbb{B}(\mathrm{m}_{a},\mathrm{m}_{j},\mathrm{m}_{l})\mathrm{\psi}^{'}(\mathcal{C}_a^{n+1}).
\end{align*}
The convexity result in Lemma {\ref{lemma}} allows us to obtain
\begin{align}\label{Equi2}
\sum_{a=1}^{\mathfrak{I}} [\mathrm{\psi}(\mathcal{C}_a^{n+1})-\mathrm{\psi}(\mathcal{C}_a^{n})]\Delta \mathrm{m}_{a} & \leq  2 {\lambda} \Delta t\sum_{a=1}^{\mathfrak{I}}\sum_{l=1}^{\mathfrak{I}}\sum_{j=1}^{\mathfrak{I}}\mathrm{m}_{j}\mathcal{C}_{j}^{n}\Delta \mathrm{m}_{j}\mathcal{C}_{l}^{n}\Delta \mathrm{m}_{l}\Delta \mathrm{m}_{a}[ \mathrm{\psi}(\mathcal{C}_a^{n+1})+{\mathrm{\psi}}(\mathbb{B}(\mathrm{m}_{a},\mathrm{m}_{j},\mathrm{m}_{l}))] \nonumber \\
& \leq 2 {\lambda} \Delta t\sum_{a=1}^{\mathfrak{I}}\sum_{l=1}^{\mathfrak{I}}\sum_{j=1}^{\mathfrak{I}}\mathrm{m}_{j}\mathcal{C}_{j}^{n}\Delta \mathrm{m}_{j}\mathcal{C}_{l}^{n}\Delta \mathrm{m}_{l}\Delta \mathrm{m}_{a}\mathrm{\psi}(\mathcal{C}_a^{n+1})\nonumber\\
& +  2 {\lambda} \Delta t\sum_{a=1}^{\mathfrak{I}}\sum_{l=1}^{\mathfrak{I}}\sum_{j=1}^{\mathfrak{I}}\mathrm{m}_{j}\mathcal{C}_{j}^{n}\Delta \mathrm{m}_{j}\mathcal{C}_{l}^{n}\Delta \mathrm{m}_{l}\Delta \mathrm{m}_{a}{\mathrm{\psi}}(\mathbb{B}(\mathrm{m}_{a},\mathrm{m}_{j},\mathrm{m}_{l})).
\end{align}
After employing  Eq.(\ref{Tproperty}) and Eq.(\ref{36}) into the second term on RHS of the Eq.(\ref{Equi2}) lead to
\begin{align}\label{Equi3}
 2 {\lambda} \Delta t\sum_{a=1}^{\mathfrak{I}}\sum_{l=1}^{\mathfrak{I}}\sum_{j=1}^{\mathfrak{I}}\mathrm{m}_{j}\mathcal{C}_{j}^{n}\Delta& \mathrm{m}_{j}\mathcal{C}_{l}^{n}\Delta \mathrm{m}_{l}\Delta  \mathrm{m}_{a}{\mathrm{\psi}}(\mathbb{B}(\mathrm{m}_{a},\mathrm{m}_{j},\mathrm{m}_{l}))\nonumber \\ 
 &=  2 {\lambda} \Delta t\sum_{a=1}^{\mathfrak{I}}\sum_{l=1}^{\mathfrak{I}}\sum_{j=1}^{\mathfrak{I}} \mathrm{m}_{j}\mathcal{C}_{j}^{n}\Delta \mathrm{m}_{j}\mathcal{C}_{l}^{n}\Delta \mathrm{m}_{l}\Delta \mathrm{m}_{a}\frac{{\mathrm{\psi}}(\mathbb{B}(\mathrm{m}_{a},\mathrm{m}_{j},\mathrm{m}_{l}))}{\{\mathbb{B}(\mathrm{m}_{a},\mathrm{m}_{j},\mathrm{m}_{l})\}^{\theta}}{\mathbb{B}(\mathrm{m}_{a},\mathrm{m}_{j},\mathrm{m}_{l})}^{\theta}\nonumber\\ 
 &\leq 2{\lambda} \Delta t \mathfrak{R} \Pi_{\theta}(\psi) M_{1}^{in}{\|\mathbb{B}\|}^{\theta}_{\infty}\sum_{l=1}^{\mathfrak{I}}\mathcal{C}_{l}^{n} \Delta \mathrm{m}_{l}\nonumber \\
      &\leq 2\lambda  \Delta t \mathfrak{R} \Pi_{\theta}(\psi) M_{1}^{in}{\|\mathbb{B}\|}^{\theta}_{\infty} \|\mathcal{C}^{in}\|_{L^1}\,e^{2\lambda \mathfrak{R} \|\mathbb{B}\|_{L^{\infty}} M_{1}^{in} \mathfrak{T}}.
\end{align}
Now, Eq.(\ref{Equi2}) and Eq.(\ref{Equi3}) imply that
\begin{align*}
\sum_{a=1}^{\mathfrak{I}} [\mathrm{\psi}(\mathcal{C}_a^{n+1})-\mathrm{\psi}(\mathcal{C}_a^{n})]\Delta \mathrm{m}_{a}& \leq  2\lambda \Delta t M_{1}^{in}\|\mathcal{C}^{in}\|_{L^1}\,e^{2\lambda \mathfrak{R} \|\mathbb{B}\|_{L^{\infty}} M_{1}^{in} \mathfrak{T}}\sum_{a=1}^{\mathfrak{I}}\Delta \mathrm{m}_{a}\mathrm{\psi}(\mathcal{C}_a^{n+1})\nonumber \\
& + 2\lambda  \Delta t \mathfrak{R} \Pi_{\theta}(\psi)M_{1}^{in}{\|\mathbb{B}\|}^{\theta}_{\infty} \|\mathcal{C}^{in}\|_{L^1}\,e^{2\lambda \mathfrak{R} \|\mathbb{B}\|_{L^{\infty}} M_{1}^{in} \mathfrak{T}}.
\end{align*}
It can be easily simplified as
\begin{align*}
(1-2\lambda \Delta t M_{1}^{in}\|\mathcal{C}^{in}\|_{L^1}\,e^{2\lambda \mathfrak{R} \|\mathbb{B}\|_{L^{\infty}} M_{1}^{in} \mathfrak{T}})\sum_{a=1}^{\mathfrak{I}}\Delta \mathrm{m}_{a}\mathrm{\psi}(\mathcal{C}_a^{n+1}) \leq & \sum_{a=1}^{\mathfrak{I}}\Delta \mathrm{m}_{a}\mathrm{\psi}(\mathcal{C}_a^{n}) \nonumber \\
+ 2\lambda  \Delta t \mathfrak{R}& \Pi_{\theta}(\psi)M_{1}^{in}{\|\mathbb{B}\|}^{\theta}_{\infty} \|\mathcal{C}^{in}\|_{L^1}\,e^{2\lambda \mathfrak{R} \|\mathbb{B}\|_{L^{\infty}} M_{1}^{in} \mathfrak{T}}. 
\end{align*}
The inequality mentioned above suggests that 
\begin{align*}
\sum_{a=1}^{\mathfrak{I}}   \Delta \mathrm{m}_a \mathrm{\psi}(\mathcal{C}_a^{n+1}) \le A \sum_{a=1}^{\mathfrak{I}}   \Delta \mathrm{m}_a \mathrm{\psi}(\mathcal{C}_a^{n})+  B,
\end{align*} 
where 
$$  A= \frac{1}{(1- 2 \lambda \Delta t M_{1}^{in}\|\mathcal{C}^{in}\|_{L^1}\,e^{2\lambda \mathfrak{R} \|\mathbb{B}\|_{L^{\infty}} M_{1}^{in} \mathfrak{T}})},  \,\, 
B = \frac{2\lambda \Delta t \mathfrak{R} \Pi_{\theta}(\psi)M_{1}^{in}{\|\mathbb{B}\|}^{\theta}_{\infty} \|\mathcal{C}^{in}\|_{L^1}\,e^{2\lambda \mathfrak{R} \|\mathbb{B}\|_{L^{\infty}} M_{1}^{in} \mathfrak{T}}}{(1-2\lambda \Delta t M_{1}^{in}\|\mathcal{C}^{in}\|_{L^1}\,e^{2\lambda \mathfrak{R} \|\mathbb{B}\|_{L^{\infty}} M_{1}^{in} \mathfrak{T}})}.
$$
Therefore,
	\begin{align}\label{Equi4}
	\sum_{a=1}^{\mathfrak{I}}   \Delta \mathrm{m}_a \mathrm{\psi}(\mathcal{C}_a^{n}) \le A^{n} \sum_{a=1}^{\mathfrak{I}}   \Delta \mathrm{m}_a \mathrm{\psi}(\mathcal{C}_a^{in})+  B \frac{A^{n} -1}{A-1}.
	\end{align}
The result of (\ref{convex}) and Jensen's inequality allows us to 
		\begin{align}\label{Equi5}
	\int_0^{\mathbb{R}}  \mathrm{\psi}(\mathcal{C}^{\hslash}(t, \mathrm{m}))\,d\mathrm{m} \le & A^{n} \sum_{a=1}^{\mathfrak{I}}   \Delta \mathrm{m}_a \mathrm{\psi} \bigg( \frac{1}{\Delta \mathrm{m}_a } \int_{\wp_a}\mathcal{C}^{in}(\mathrm{m}) d\mathrm{m} \bigg)+  B \frac{A^{n} -1}{A-1}\nonumber\\
	\le & A^{n}\mathcal{I}+  B \frac{A^{n} -1}{A-1} < \infty, \quad \text{for all}\ \ \ t\in [0,\mathfrak{T}].
	\end{align}
	The computations for Case-(2), Case-(3) and Case-(4) are equivalent to the Case-(1). Only just, we got the different values of A and B, which are the following
	$$  A= \frac{1}{(1-  \lambda \Delta t M_{1}^{in}\|\mathcal{C}^{in}\|_{L^1}\,e^{2\lambda R \|\mathbb{B}\|_{L^{\infty}} M_{1}^{in} \mathfrak{T}})},  \,\, 
	B = \frac{\lambda \Delta t R \Pi_{\theta}(\psi)M_{1}^{in}{\|\mathbb{B}\|}^{\theta}_{\infty} \|\mathcal{C}^{in}\|_{L^1}\,e^{2\lambda R \|\mathbb{B}\|_{L^{\infty}} M_{1}^{in} \mathfrak{T}}}{(1-\lambda \Delta t M_{1}^{in}\|\mathcal{C}^{in}\|_{L^1}\,e^{2\lambda R \|\mathbb{B}\|_{L^{\infty}} M_{1}^{in} \mathfrak{T}})}.
	$$
	We can now assert that the family $(\mathcal{C}^{\hslash})$ is uniformly integrable thanks to the  De la Vallée Poussin theorem. Therefore, the sequence $(\mathcal{C}^{\hslash})$ is weakly relatively sequentially compact in $L^1(]0, \mathfrak{T}[\times ]0,\mathfrak{R}[)$ according to the Dunford-Pettis theorem, which combines this conclusion with equiboundedness. 
	\end{proof}	
	\begin{rem}
Given that a subsequence of $\mathcal{C}^{\hslash}$ converges weakly to $\mathcal{C}$ in $L^1$ as in Proposition \ref{equiintegrability}, and by using diagonal procedure, extraction of a subsequence is possible for $\mathbb{K}^{\hslash}$ and $\mathbb{B}^{\hslash}$ such that $\mathbb{K}^{\hslash} \rightarrow \mathbb{K}$ and $\mathbb{B}^{\hslash} \rightarrow \mathbb{B}$ as $\hslash\rightarrow 0.$
	\end{rem}
To prove the Theorem \ref{maintheorem} completely, we used the following point approximations on the computational domain as 
\begin{enumerate}
\item Right endpoint approximation: $
		\Xi^\hslash:\mathrm{m}\in (0,\mathfrak{R})\rightarrow
		\Xi^\hslash(\mathrm{m})=\sum_{a=1}^{\mathfrak{I}}\mathrm{m}_{a+1/2}\chi_{\wp_a}(\mathrm{m}).	$
	\item 	Left endpoint approximation:	$	\xi^\hslash:\mathrm{m}\in (0,\mathfrak{R})\rightarrow
		\xi^\hslash(\mathrm{m})=\sum_{a=1}^{\mathfrak{I}}\mathrm{m}_{a-1/2}\chi_{\wp_a}(\mathrm{m}).$
		\end{enumerate}
	
		We will employ the subsequent lemma, which follows from the Egorov and Dunford-Pettis theorems, to demonstrate the weak convergence of the approximated solutions.
\begin{lem}{\label{Wconverge}} [\cite{laurenccot2002continuous}]
Assuming a constant $\mathcal{L}$ and open subset $\varpi$ of $\mathbb{R}^m$, and  two sequences $(g_n)$ and
$(d_n)$ such that $(g_n)\in L^1(\varpi), g\in L^1(\varpi)$ and $$g_n\rightharpoonup g \in L^1(\varpi),\ \ \ \text{as}\ n\rightarrow \infty,$$ $(d_n)\in
L^\infty(\varpi), d \in L^\infty(\varpi),$ and for all $n\in
\mathbb{N}, |d_n|\leq \mathcal{L}$ with $$d_n\rightarrow d,\ \ \text{almost
everywhere in}\ \ \varpi  \ \text{as}\ \ n\rightarrow
\infty.$$ Then
$$\lim_{n\rightarrow \infty}\|g_n(d_n-d)\|_{L^1(\varpi)}=0$$
and $$g_n\, d_n\rightharpoonup g\, d\in L^1(\varpi),\ \ \ \text{as}\ n\rightarrow \infty.$$
\end{lem}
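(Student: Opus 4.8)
The plan is to exploit the two hypotheses in tandem: weak $L^1$ convergence of $(g_n)$ forces the sequence to be \emph{uniformly integrable} (by the Dunford--Pettis theorem a weakly convergent sequence is weakly relatively compact, hence equiintegrable), while the almost-everywhere convergence of the uniformly bounded $(d_n)$ can be upgraded to \emph{uniform} convergence off a small set via Egorov's theorem. First I would record that weak convergence yields a uniform bound $C := \sup_n \|g_n\|_{L^1(\varpi)} < \infty$, and that uniform integrability means: for every $\varepsilon > 0$ there is $\delta > 0$ such that every measurable $E \subset \varpi$ with $|E| < \delta$ satisfies $\sup_n \int_E |g_n|\,d\mathrm{m} \le \varepsilon$.

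The core estimate is the first assertion, $\|g_n(d_n-d)\|_{L^1(\varpi)} \to 0$. Fix $\varepsilon>0$ and choose $\delta$ as above. By Egorov's theorem there is a measurable set $A \subset \varpi$ with $|\varpi \setminus A| < \delta$ on which $d_n \to d$ uniformly. I would then split
\[
\int_\varpi |g_n|\,|d_n-d|\,d\mathrm{m} = \int_A |g_n|\,|d_n-d|\,d\mathrm{m} + \int_{\varpi\setminus A} |g_n|\,|d_n-d|\,d\mathrm{m}.
\]
On $A$ the integrand is controlled by $\|d_n-d\|_{L^\infty(A)}\,C$, which tends to zero by uniform convergence; on $\varpi\setminus A$ the bound $|d_n-d| \le 2\mathcal{L}$ together with the choice of $\delta$ gives $\int_{\varpi\setminus A} |g_n|\,|d_n-d|\,d\mathrm{m} \le 2\mathcal{L}\varepsilon$. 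Letting $n\to\infty$ and then $\varepsilon\to 0$ yields the claim.

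For the weak convergence of the products I would decompose $g_n d_n = g_n(d_n-d) + g_n d$. The first summand converges to $0$ strongly in $L^1(\varpi)$ by the step just completed, hence also weakly. For the second summand, since $d\in L^\infty(\varpi)$, for any test function $\phi\in L^\infty(\varpi)$ the function $d\phi$ again lies in $L^\infty(\varpi)$, so $\int_\varpi g_n\, d\,\phi\,d\mathrm{m} = \int_\varpi g_n\,(d\phi)\,d\mathrm{m} \to \int_\varpi g\,(d\phi)\,d\mathrm{m} = \int_\varpi g\, d\,\phi\,d\mathrm{m}$ by the weak convergence $g_n \rightharpoonup g$; thus $g_n d \rightharpoonup g d$. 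Adding the two contributions gives $g_n d_n \rightharpoonup g d$ in $L^1(\varpi)$.

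The step I expect to be the main (though mild) obstacle is justifying the uniform integrability of $(g_n)$ and ensuring the bad-set estimate is uniform in $n$: this is exactly where the Dunford--Pettis characterization is indispensable, since weak convergence alone does not obviously control $\int_E |g_n|$ over shrinking sets $E$. Once equiintegrability is in hand the Egorov splitting is routine, and one should note that the argument tacitly uses $|\varpi| < \infty$ (as holds in our application with $\varpi = \,]0,\mathfrak{T}[\times]0,\mathfrak{R}[$), so that the complement of the Egorov set genuinely has small measure.
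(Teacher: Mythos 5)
Your proof is correct and follows exactly the route the paper indicates: the paper does not actually prove Lemma \ref{Wconverge} but cites it from \cite{laurenccot2002continuous}, remarking only that it ``follows from the Egorov and Dunford-Pettis theorems,'' which is precisely the combination you use (Dunford--Pettis for the equiintegrability of $(g_n)$, Egorov to split off a small exceptional set, and the decomposition $g_nd_n=g_n(d_n-d)+g_nd$ for the weak limit). Your added observation that the Egorov step tacitly requires $|\varpi|<\infty$, as holds in the application $\varpi=\,]0,\mathfrak{T}[\times]0,\mathfrak{R}[$, is a worthwhile clarification of the stated hypotheses.
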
\enter		
We have now accumulated  all  the evidences required to support Theorem \ref{maintheorem}. Let $\varphi\in C^1([0,\mathfrak{T}]\times ]0,\mathfrak{R}])$ be a test function with compact support with in $[0,t_{N-1}]$ for small $t$. Define the left endpoint approximation for the space variable of $\varphi$ on $\wp_a$ and the finite volume for the time variable over $\tau_n$ by
$$\varphi_a^n=\frac{1}{\Delta t}\int_{t_n}^{t_{n+1}}\varphi(t,\mathrm{m}_{a-1/2})dt.$$  

Multiplication of $\varphi_a^n$ with  (\ref{fully}) yields the following equation after employing the summation over $n$ \& $a$ 
	\begin{align}\label{convergence1}
	\sum_{n=0}^{N-1}\sum_{a=1}^{\mathfrak{I}}\Delta \mathrm{m}_{a} (\mathcal{C}_{a}^{n+1}-\mathcal{C}_{a}^{n})\varphi_a^n = &{\Delta t}\sum_{n=0}^{N-1}\sum_{a=1}^{\mathfrak{I}}\sum_{l=1}^{\mathfrak{I}}\sum_{j=a}^{\mathfrak{I}}\mathbb{K}_{j,l}\mathcal{C}_{j}^{n}\mathcal{C}_{l}^{n}\Delta \mathrm{m}_{j} \Delta \mathrm{m}_{l}\varphi_a^n \int_{\mathrm{m}_{a-1/2}}^{p_{j}^{a}}\mathbb{B}(\mathrm{m},\mathrm{m}_{j},\mathrm{m}_{l})d\mathrm{m}\nonumber \\
	&-{\Delta t}\sum_{n=0}^{N-1}\sum_{a=1}^{\mathfrak{I}}\sum_{j=1}^{\mathfrak{I}}\mathbb{K}_{a,j}\mathcal{C}_{a}^{n}\mathcal{C}_{j}^{n}\Delta \mathrm{m}_{a}\Delta \mathrm{m}_{j}\varphi_a^n. 		
					\end{align}
				The left-hand side (LHS) of the aforementioned equation, open for $n$ that leads to
	\begin{align*}
	\sum_{n=0}^{N-1}\sum_{a=1}^{\mathfrak{I}}\Delta \mathrm{m}_{a} (\mathcal{C}_{a}^{n+1}-\mathcal{C}_{a}^{n})\varphi_a^n = &	\sum_{n=0}^{N-1}\sum_{a=1}^{\mathfrak{I}}\Delta	\mathrm{m}_a  \mathcal{C}_a^{n+1}(\varphi_a^{n+1}-\varphi_a^n)+ \sum_{a=1}^{\mathfrak{I}}\Delta \mathrm{m}_a  \mathcal{C}_a^{in}\varphi_a^0\\
	=&\sum_{n=0}^{N-1}\sum_{a=1}^{\mathfrak{I}}\int_{\tau_{n+1}}\int_{\wp_a}\mathcal{C}^{\hslash}(t,\mathrm{m})\frac{\varphi(t,\xi^\hslash(\mathrm{m}))-\varphi(t-\Delta t,\xi^\hslash(\mathrm{m}))}{\Delta t}d\mathrm{m}\,dt \\&+\sum_{a=1}^{\mathfrak{I}}\int_{\wp_a}\mathcal{C}^{\hslash}(0,\mathrm{m})\frac{1}{\Delta t}\int_0^{\Delta t}\varphi(t,\xi^\hslash(\mathrm{m}))dt\,d\mathrm{m}\\
	= &\int_{\Delta t}^\mathfrak{T} \int_{0}^{\mathfrak{R}} \mathcal{C}^{\hslash}(t,\mathrm{m})\frac{\varphi(t,\xi^\hslash(\mathrm{m}))-\varphi(t-\Delta t,\xi^\hslash(\mathrm{m}))}{\Delta t}d\mathrm{m}\,dt\\
		& + \int_{0}^\mathfrak{R} \mathcal{C}^{\hslash}(0,\mathrm{m})\frac{1}{\Delta t}\int_0^{\Delta t}\varphi(t,\xi^\hslash(\mathrm{m}))dt\,d\mathrm{m}.
	\end{align*}
	  With the aid of Lemma \ref{Wconverge}, $\mathcal{C}^{\hslash}(0,\mathrm{m})\rightarrow \mathcal{C}^{in}$ in $L^1]0,\mathfrak{R}[$ will yield the following result since $\varphi\in C^1([0,\mathfrak{T}]\times ]0,\mathfrak{R}])$ has compact support and a bounded derivative				
	\begin{align}\label{finaltime1}
		\int_{0}^\mathfrak{R} \mathcal{C}^{\hslash}(0,\mathrm{m})\frac{1}{\Delta t}\int_0^{\Delta t}\varphi(t,\xi^\hslash(\mathrm{m}))dtd\mathrm{m}\rightarrow \int_{0}^\mathfrak{R} \mathcal{C}^{in}(\mathrm{m})\varphi(0,\mathrm{m})d\mathrm{m}
	\end{align}
as max$\{\hslash,\Delta t\}$ goes to $0$.
Now, applying Taylor series expansion of $\varphi$, Lemma \ref{Wconverge} and Proposition \ref{equiintegrability} confirm 
\begin{align*}
\int_0^\mathfrak{T}\int_0^\mathfrak{R} \mathcal{C}^{\hslash}(t,\mathrm{m})\frac{\varphi(t,\xi^\hslash(\mathrm{m}))-\varphi(t-\Delta
t,\xi^\hslash(\mathrm{m}))}{\Delta t}d\mathrm{m}\,dt \rightarrow \int_0^\mathfrak{T}\int_0^\mathfrak{R} \mathcal{C}(t,\mathrm{m})&\frac{\partial \varphi}{\partial t}(t,\mathrm{m})d\mathrm{m}\,dt,
\end{align*}
for max$\{\hslash,\Delta t\} \rightarrow 0$ and hence, we obtain
	\begin{align}\label{finaltime2}
	&\int_{\Delta t}^\mathfrak{T} \int_0^\mathfrak{R}
	\underbrace{\mathcal{C}^{\hslash}(t,\mathrm{m})\frac{\varphi(t,\xi^\hslash(\mathrm{m}))-\varphi(t-\Delta t,\xi^\hslash(\mathrm{m}))}{\Delta t}}_{\mathcal{C}(\varphi)} d\mathrm{m}\,dt \nonumber \\ 
	&= \int_0^\mathfrak{T} \int_0^\mathfrak{R} \mathcal{C}(\varphi)\, d\mathrm{m}\,dt -\int_0^{\Delta t} \int_0^\mathfrak{R} \mathcal{C}(\varphi)\, d\mathrm{m}\,dt  \rightarrow \int_0^\mathfrak{T} \int_0^\mathfrak{R}  \mathcal{C}(t,\mathrm{m})\frac{\partial \varphi}{\partial t}(t,\mathrm{m})d\mathrm{m}\,dt.
	\end{align}
Now, the first term in the RHS of Eq.(\ref{convergence1})	is taken for observing the computation
\begin{align}\label{convergence2}
{\Delta t}\sum_{n=0}^{N-1}\sum_{a=1}^{\mathfrak{I}}\sum_{l=1}^{\mathfrak{I}}\sum_{j=a}^{\mathfrak{I}}\mathbb{K}_{j,l}&\mathcal{C}_{j}^{n}\mathcal{C}_{l}^{n}\Delta \mathrm{m}_{j} \Delta \mathrm{m}_{l}\varphi_a^n \int_{\mathrm{m}_{a-1/2}}^{p_{j}^{a}}\mathbb{B}(\mathrm{m},\mathrm{m}_{j},\mathrm{m}_{l})d\mathrm{m} \nonumber \\
=&{\Delta t}\sum_{n=0}^{N-1}\sum_{a=1}^{\mathfrak{I}}\sum_{l=1}^{\mathfrak{I}}\mathbb{K}_{a,l}\mathcal{C}_{a}^{n}\mathcal{C}_{l}^{n}\Delta \mathrm{m}_{a} \Delta \mathrm{m}_{l}\varphi_a^n \int_{\mathrm{m}_{a-1/2}}^{\mathrm{m}_{a}}\mathbb{B}(\mathrm{m},\mathrm{m}_{a},\mathrm{m}_{l})d\mathrm{m} \nonumber \\
 &+ \Delta t\sum_{n=0}^{N-1}\sum_{a=1}^{\mathfrak{I}}\sum_{l=1}^{\mathfrak{I}}\sum_{j=a+1}^{\mathfrak{I}} \mathbb{K}_{j,l}\mathcal{C}_{j}^{n}\mathcal{C}_{l}^{n}\Delta \mathrm{m}_{j} \Delta \mathrm{m}_{l}\varphi_a^n \int_{\mathrm{m}_{a-1/2}}^{\mathrm{m}_{a+1/2}}\mathbb{B}(\mathrm{m},\mathrm{m}_{j},\mathrm{m}_{l})d\mathrm{m}.
\end{align}

The first term on the RHS of Eq.(\ref{convergence2}) simplifies to
\begin{align}\label{convergence3}
&\Delta t\sum_{n=0}^{N-1}\sum_{a=1}^{\mathfrak{I}}\sum_{l=1}^{\mathfrak{I}}\mathbb{K}_{a,l}\mathcal{C}_{a}^{n}\mathcal{C}_{l}^{n}\Delta \mathrm{m}_{a} \Delta \mathrm{m}_{l}\varphi_a^n \int_{\mathrm{m}_{a-1/2}}^{\mathrm{m}_{a}}\mathbb{B}(\mathrm{m},\mathrm{m}_{a},\mathrm{m}_{l})d\mathrm{m} \nonumber \\
&=\sum_{n=0}^{N-1}\sum_{a=1}^{\mathfrak{I}}\sum_{l=1}^{\mathfrak{I}}\int_{\tau_{n}}\int_{\Lambda_{a}^{h}}\int_{\wp_l}\mathbb{K}^{\hslash}(\mathrm{m},z)\mathcal{C}^{\hslash}(t,\mathrm{m})\mathcal{C}^{\hslash}(t,z)\varphi(t,\xi^\hslash(\mathrm{m}))\int_{\xi^{\hslash}(\mathrm{m})}^{X^{\hslash}(\mathrm{m})}\mathbb{B}(r,X^{\hslash}(\mathrm{m}),X^{\hslash}(z))dr\,dz\,d\mathrm{m}\,dt\nonumber \\
&= \int_{0}^{\mathfrak{T}}\int_{0}^{\mathfrak{R}}\int_{0}^{\mathfrak{R}}\mathbb{K}^{\hslash}(\mathrm{m},z)\mathcal{C}^{\hslash}(t,\mathrm{m})\mathcal{C}^{\hslash}(t,z)\varphi(t,\xi^\hslash(\mathrm{m}))\int_{\xi^{\hslash}(\mathrm{m})}^{X^{\hslash}(\mathrm{m})}\mathbb{B}(r,X^{\hslash}(\mathrm{m}),X^{\hslash}(z))dr\,dz\,d\mathrm{m}\,dt.
\end{align}
Next, the second term of Eq.(\ref{convergence2}) leads to
\begin{align}\label{convergence4}
\Delta t\sum_{n=0}^{N-1}\sum_{a=1}^{\mathfrak{I}}\sum_{l=1}^{\mathfrak{I}}\sum_{j=a+1}^{\mathfrak{I}} \mathbb{K}_{j,l}\mathcal{C}_{j}^{n}\mathcal{C}_{l}^{n}\Delta \mathrm{m}_{j} \Delta \mathrm{m}_{l}\varphi_a^n \int_{\mathrm{m}_{a-1/2}}^{\mathrm{m}_{a+1/2}}\mathbb{B}(\mathrm{m},\mathrm{m}_{j},\mathrm{m}_{l})d\mathrm{m} \nonumber \\
=\sum_{n=0}^{N-1}\sum_{a=1}^{\mathfrak{I}}\sum_{l=1}^{\mathfrak{I}}\sum_{j=a+1}^{\mathfrak{I}}\int_{\tau_{n}}\int_{\wp_a}\int_{\wp_l}\int_{\wp_j}\Big\{\mathbb{K}^{\hslash}(\mathrm{n},z)\mathcal{C}^{\hslash}(t,\mathrm{n})\mathcal{C}^{\hslash}(t,z)\varphi(t,\xi^\hslash(\mathrm{m}))& \nonumber \\ \frac{1}{\Delta \mathrm{m}_{i}}\int_{\wp_a}\mathbb{B}(r,X^{\hslash}(\mathrm{n}),X^{\hslash}(z))dr\Big\}\,d\mathrm{n}\,dz\,d\mathrm{m}\,dt \nonumber \\
= \int_{0}^{\mathfrak{T}}\int_{0}^{\mathfrak{R}}\int_{0}^{\mathfrak{R}}\int_{\Xi^{\hslash}(\mathrm{m})}^{\mathfrak{R}} \mathbb{K}^{\hslash}(\mathrm{n},z)\mathcal{C}^{\hslash}(t,\mathrm{n})\mathcal{C}^{\hslash}(t,z)\varphi(t,\xi^\hslash(\mathrm{m}))\mathbb{B}(X^{\hslash}(\mathrm{m}),X^{\hslash}(\mathrm{n}),X^{\hslash}(z))&d\mathrm{n}\,dz\,d\mathrm{m}\,dt.
\end{align}
Eqs.(\ref{convergence2})-(\ref{convergence4}), Lemma \ref{Wconverge} and Proposition \ref{equiintegrability} imply that as max$\{\hslash,\Delta t\} \rightarrow 0$ 
\begin{align}\label{convergence5}
{\Delta t}\sum_{n=0}^{N-1}\sum_{a=1}^{\mathfrak{I}}\sum_{l=1}^{\mathfrak{I}}\sum_{j=a}^{\mathfrak{I}}\mathbb{K}_{j,l}\mathcal{C}_{j}^{n}\mathcal{C}_{l}^{n}\Delta \mathrm{m}_{j} \Delta \mathrm{m}_{l}\varphi_a^n \int_{\mathrm{m}_{a-1/2}}^{p_{j}^{a}}\mathbb{B}(\mathrm{m},\mathrm{m}_{j},\mathrm{m}_{l})d\mathrm{m} \nonumber \\
\rightarrow \int_{0}^{\mathfrak{T}}\int_{0}^{\mathfrak{R}}\int_{0}^{\mathfrak{R}}\int_{\mathrm{m}}^{\mathfrak{R}} \mathbb{K}(\mathrm{n},z)\mathcal{C}(t,\mathrm{n})\mathcal{C}(t,z)\varphi(t,\mathrm{m})&\mathbb{B}(\mathrm{m},\mathrm{n},z)d\mathrm{n}\,dz\,d\mathrm{m}\,dt.
\end{align}
Now, the last term of Eq.(\ref{convergence1}) can be simplified as
	\begin{align}\label{convergence6}
	{\Delta t}\sum_{n=0}^{N-1}\sum_{a=1}^{\mathfrak{I}}\sum_{j=1}^{\mathfrak{I}}\mathbb{K}_{a,j}&\mathcal{C}_{a}^{n}\mathcal{C}_{j}^{n}\Delta \mathrm{m}_{a}\Delta \mathrm{m}_{j}\varphi_a^n	\nonumber\\
	&=\sum_{n=0}^{N-1}\sum_{a=1}^{\mathfrak{I}}\sum_{j=1}^{\mathfrak{I}}\int_{\tau_{n}}\int_{\wp_a}\int_{\wp_j}\mathbb{K}^{\hslash}(\mathrm{m},\mathrm{n})\mathcal{C}^{\hslash}(t,\mathrm{m})\mathcal{C}^{\hslash}(t,\mathrm{n})\varphi(t,\xi^\hslash(\mathrm{m}))d\mathrm{n}\,d\mathrm{m}\,dt \nonumber	\\
&\rightarrow \int_{0}^{\mathfrak{T}}\int_{0}^{\mathfrak{R}}\int_{0}^{\mathfrak{R}}\mathbb{K}(\mathrm{m},\mathrm{n})\mathcal{C}(t,\mathrm{m})\mathcal{C}(t,\mathrm{n})\varphi(t,\mathrm{m})d\mathrm{n}\,d\mathrm{m}\,dt,
	\end{align}
	for  max$\{\hslash,\Delta t\} \rightarrow 0$.
	Eqs.(\ref{convergence1})-(\ref{convergence6}) deliver the desired results for the weak convergence as presented in Eq.(\ref{convergence0}).

\section{Error Simulation }\label{Error}
In this section, the error estimation is explored for CBE, which is based on the idea of \cite{bourgade2008convergence}. Taking the uniform mesh is crucial for estimating the error component, i.e., $\Delta \mathrm{m}_a=\hslash$\,\,$\forall a\in \{1,2,\ldots, \mathfrak{I}\}$. The error estimate is achieved by providing an  estimation on the difference $\mathcal{C}^{\hslash}-\mathcal{C}$, where $\mathcal{C}^{\hslash}$ is constructed using the numerical technique and $\mathcal{C}$ represents the exact solution to the problem (\ref{maineq}).
By using the following theorem, we can determine the error estimate by making some assumptions about the kernels and the initial datum.
	\begin{thm}\label{errorth1}
	Assume that the collisional and breakage kernels hold $\mathbb{K}\in W^{1,\infty}_{loc}(\mathbb{R}^{+} \times \mathbb{R}^{+}),$ $\mathbb{B}\in W^{1,\infty}_{loc}(\mathbb{R}^{+} \times \mathbb{R}^{+}\times \mathbb{R}^{+})$ and  initial datum  $\mathcal{C}^{in}\in W^{1,\infty}_{loc}(\mathbb{R}^{+}).$ Moreover,  assuming a  uniform mesh and  $\Delta t$ that fulfill the condition (\ref{22}) imply that the error estimation
	\begin{align}\label{errorbound}
	\|\mathcal{C}^{\hslash}-\mathcal{C}\|_{L^{\infty}(0,\mathfrak{T};L^{1}]0, \mathfrak{R}[)} \leq \mathcal{H}(\mathfrak{T},\mathfrak{R})(\hslash+\Delta t)
	\end{align}
	where $\mathcal{H}(\mathfrak{T},\mathfrak{R})\geq 0$ is a constant  and $\mathcal{C}$ is the weak solution to (\ref{maineq}-\ref{initial}). 
	\end{thm}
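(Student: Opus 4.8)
The plan is to adapt the comparison argument of Bourgade and Filbet \cite{bourgade2008convergence} to the non-conservative collisional scheme, measuring the discrepancy between the numerical solution and the cell averages of the exact solution and closing the estimate with a discrete Gronwall inequality. First I would record the extra regularity bought by the $W^{1,\infty}_{loc}$ hypotheses: since $\mathbb{K},\mathbb{B}$ and $\mathcal{C}^{in}$ are locally Lipschitz and the problem is posed on the bounded truncated domain $]0,\mathfrak{R}]$, the solution of (\ref{trunceq}) is unique and remains uniformly Lipschitz in $\mathrm{m}$ on $[0,\mathfrak{T}]$, while the a priori controls already proved, namely the $L^1$ bound (\ref{36}) and the first-moment monotonicity $\int_0^{\mathfrak{R}} X^{\hslash}\mathcal{C}^{\hslash}\le M_1^{in}$, remain in force for both $\mathcal{C}$ and $\mathcal{C}^{\hslash}$. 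This spatial Lipschitz regularity is the quantitative ingredient that converts mesh widths into powers of $\hslash$.

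Introduce the exact cell averages $\mathcal{C}_a(t):=\frac{1}{\Delta \mathrm{m}_a}\int_{\wp_a}\mathcal{C}(t,\mathrm{m})\,d\mathrm{m}$ and the cell-wise error $\mathcal{E}_a^n:=\mathcal{C}_a^n-\mathcal{C}_a(t_n)$, and split
\[
\|\mathcal{C}^{\hslash}(t_n,\cdot)-\mathcal{C}(t_n,\cdot)\|_{L^1}\le \sum_{a=1}^{\mathfrak{I}}\Delta \mathrm{m}_a\,|\mathcal{E}_a^n| + \Big\|\sum_{a=1}^{\mathfrak{I}}\mathcal{C}_a(t_n)\chi_{\wp_a}-\mathcal{C}(t_n,\cdot)\Big\|_{L^1}.
\]
The second (projection) term is $O(\hslash)$ directly from the uniform Lipschitz bound on $\mathcal{C}(t_n,\cdot)$, so the task reduces to controlling $\mathcal{N}^n:=\sum_a\Delta \mathrm{m}_a|\mathcal{E}_a^n|$. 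Note that the scheme is initialised with the exact averages, whence $\mathcal{E}_a^0=0$.

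Next I would obtain the exact per-cell evolution by integrating (\ref{trunceq}) over $\wp_a\times\tau_n$ and subtract the scheme (\ref{fully}) applied to the exact averages, producing a recursion of the form $\mathcal{E}_a^{n+1}=\mathcal{E}_a^n+\Delta t\,[(\mathcal{B}_a(\mathcal{C}^n)-\mathcal{B}_a(\mathcal{C}(t_n)))-(\mathcal{D}_a(\mathcal{C}^n)-\mathcal{D}_a(\mathcal{C}(t_n)))]+\Delta t\,\mathcal{R}_a^n$, where $\mathcal{R}_a^n$ is the consistency residual created by evaluating $\mathbb{K},\mathbb{B}$ at grid nodes, by the midpoint quadrature of the $\mathrm{n}$- and $z$-integrations, by the partial-cell breakage integral $\int_{\mathrm{m}_{a-1/2}}^{p_j^a}\mathbb{B}$, and by the explicit Euler step. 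Using the $W^{1,\infty}_{loc}$ Lipschitz bounds, every quadrature and node-evaluation error is $O(\hslash)$ and the time-stepping error is $O(\Delta t)$, giving $\sum_a\Delta \mathrm{m}_a|\mathcal{R}_a^n|\le C(\mathfrak{T},\mathfrak{R})(\hslash+\Delta t)$. For the stability part I would exploit bilinearity, writing $\mathcal{C}_j^n\mathcal{C}_l^n-\mathcal{C}_j(t_n)\mathcal{C}_l(t_n)=\mathcal{E}_j^n\mathcal{C}_l^n+\mathcal{C}_j(t_n)\mathcal{E}_l^n$; since $\mathbb{K}$ and $\mathbb{B}$ are bounded on the truncated domain and the $L^1$ and first-moment bounds are available, a rearrangement in $a$ in the spirit of (\ref{Massloss1}) yields $\sum_a\Delta \mathrm{m}_a|(\mathcal{B}_a-\mathcal{D}_a)(\mathcal{C}^n)-(\mathcal{B}_a-\mathcal{D}_a)(\mathcal{C}(t_n))|\le L(\mathfrak{T},\mathfrak{R})\,\mathcal{N}^n$ with $L$ independent of $\hslash,\Delta t$.

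Combining the two estimates gives $\mathcal{N}^{n+1}\le(1+L\Delta t)\mathcal{N}^n+C(\mathfrak{T},\mathfrak{R})\Delta t(\hslash+\Delta t)$; since $\mathcal{N}^0=0$, the discrete Gronwall lemma produces $\mathcal{N}^n\le C(\mathfrak{T},\mathfrak{R})\frac{e^{L\mathfrak{T}}-1}{L}(\hslash+\Delta t)$ uniformly for $t_n\le\mathfrak{T}$, and adding the projection estimate yields (\ref{errorbound}) with an explicit $\mathcal{H}(\mathfrak{T},\mathfrak{R})$. The main obstacle I anticipate is the consistency analysis of the birth term, in particular the genuinely non-conservative partial-cell integral with the diagonal index $j=a$ where $p_a^a=\mathrm{m}_a$ rather than $\mathrm{m}_{a+1/2}$: proving that this truncation, together with the midpoint evaluation, costs only $O(\hslash)$ requires a careful Taylor expansion of $\mathbb{B}(\cdot,\mathrm{m}_j,\mathrm{m}_l)$ about the nodes and a summation-by-parts rearrangement. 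Keeping every constant's dependence on $\mathfrak{R}$ explicit, so that the growth of the collision kernel near the truncation boundary is absorbed into $\mathcal{H}(\mathfrak{T},\mathfrak{R})$, is what makes the bookkeeping heavy.
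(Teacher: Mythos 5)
Your proposal is correct and follows essentially the same route as the paper: both rely on the $W^{1,\infty}$ regularity of the exact solution and $L^{\infty}$ bounds on $\mathcal{C}$ and $\mathcal{C}^{\hslash}$ (Proposition \ref{bound2}), rewrite the scheme's birth/death terms in continuous form with an $O(\hslash)$ consistency remainder (the diagonal half-cell breakage integral you flag as the main obstacle is exactly the term $\varepsilon(\hslash)$ in the paper, and it costs only $O(\hslash)$ simply because it is a half-cell integral of a bounded integrand --- no Taylor expansion of $\mathbb{B}$ is needed there), split the bilinear differences, and close with Gronwall. The only cosmetic difference is that you run a discrete Gronwall inequality on cell-averaged errors plus a separate $O(\hslash)$ projection term, whereas the paper compares the piecewise-constant reconstruction $\mathcal{C}^{\hslash}$ to $\mathcal{C}$ directly in $L^{1}$ and applies the continuous Gronwall lemma to the integral inequality (\ref{errorfull}).
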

Before proving the theorem, consider the following proposition, which provides  estimates on the approximate  and precise solutions $\mathcal{C}^{\hslash}$ and $\mathcal{C},$ respectively, given certain additional assumptions. These estimates are important in the analysis of  the error.
	\begin{propos}\label{bound2}
Considering the kernels $\mathbb{K} \in L^{\infty}_{loc}(\mathbb{R}^{+} \times \mathbb{R}^{+}), \mathbb{B} \in L^{\infty}_{loc}(\mathbb{R}^{+} \times \mathbb{R}^{+}\times \mathbb{R}^{+})$ and initial function $\mathcal{C}^{in}$ also exists in $L^{\infty}_{loc}$. Moreover, stability property (\ref{22}) follows for $\Delta t$, then the estimation on $\mathcal{C}^{\hslash}$ and $\mathcal{C}$ to (\ref{maineq}-\ref{initial}) as
$$ \|\mathcal{C}^{\hslash}\|_{L^{\infty}(]0,\mathfrak{T}[\times ]0, \mathfrak{R}[)}\leq \mathcal{H}(\mathfrak{T},\mathfrak{R}), \hspace{0.4cm} \|\mathcal{C}\|_{L^{\infty}(]0,\mathfrak{T}[\times ]0, \mathfrak{R}[)}\leq \mathcal{H}(\mathfrak{T},\mathfrak{R}). $$
Furthermore, if the kernels $\mathbb{K} \in    W^{1,\infty}_{loc}(\mathbb{R}^{+} \times \mathbb{R}^{+})$, $\mathbb{B}\in W^{1,\infty}_{loc}(\mathbb{R}^{+} \times \mathbb{R}^{+}\times \mathbb{R}^{+})$ and $\mathcal{C}^{in} \in W^{1,\infty}_{loc}(\mathbb{R}^{+})$, then 
\begin{align}\label{bound1}
\|\mathcal{C}\|_{W^{1,\infty}]0, \mathfrak{R}[} \leq \mathcal{H}(\mathfrak{T},\mathfrak{R}).
\end{align}
\end{propos}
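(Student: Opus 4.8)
The plan is to establish the three bounds successively, using throughout the non-negativity of the discrete and continuous solutions and the $L^1$ control already furnished by the earlier proposition; in particular I will write $\Theta(\mathfrak{T},\mathfrak{R}):=\|\mathcal{C}^{in}\|_{L^1}\,e^{2\lambda\mathfrak{R}\|\mathbb{B}\|_{\infty}M_1^{in}\mathfrak{T}}$ for the uniform $L^1$ bound (\ref{36}), and write $\bar{K}:=\|\mathbb{K}\|_{L^{\infty}(]0,\mathfrak{R}[\times]0,\mathfrak{R}[)}$. \textbf{Step 1 ($L^\infty$ bound on $\mathcal{C}^{\hslash}$).} I would begin from the fully discrete scheme (\ref{fully}). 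Since $\mathcal{C}_a^n\ge 0$, discarding the death term gives $\mathcal{C}_a^{n+1}\le\mathcal{C}_a^n+(\text{birth term})$. The inner breakage integral obeys $\int_{\mathrm{m}_{a-1/2}}^{p_j^a}\mathbb{B}\,d\mathrm{m}\le\|\mathbb{B}\|_\infty\Delta\mathrm{m}_a$, which cancels the prefactor $1/\Delta\mathrm{m}_a$; bounding $\mathbb{K}_{j,l}\le\bar{K}$ and recognising the remaining double sum as $\|\mathcal{C}^{\hslash}(t^n)\|_{L^1}^2\le\Theta^2$ yields $\mathcal{C}_a^{n+1}\le\mathcal{C}_a^n+\Delta t\,\bar{K}\|\mathbb{B}\|_\infty\Theta^2$ uniformly in $a$. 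Taking $\sup_a$ and iterating over the $N\le\mathfrak{T}/\Delta t$ steps produces $\|\mathcal{C}^{\hslash}\|_{L^\infty}\le\|\mathcal{C}^{in}\|_\infty+\mathfrak{T}\bar{K}\|\mathbb{B}\|_\infty\Theta^2=:\mathcal{H}(\mathfrak{T},\mathfrak{R})$.

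\textbf{Step 2 ($L^\infty$ bound on $\mathcal{C}$).} For the exact solution I would pass to the limit: since $\mathcal{C}^{\hslash}\rightharpoonup\mathcal{C}$ in $L^1$ and the $L^\infty$ norms are uniformly bounded by $\mathcal{H}$, the standard fact that a weak $L^1$ limit of an $L^\infty$-bounded family inherits the same $L^\infty$ bound gives $\|\mathcal{C}\|_{L^\infty}\le\mathcal{H}$. A self-contained alternative is to treat (\ref{trunceq}) as an ODE in $t$ at each fixed $\mathrm{m}$: writing $\Phi(t,\mathrm{m}):=\int_0^\mathfrak{R}\mathbb{K}(\mathrm{m},\mathrm{n})\mathcal{C}(t,\mathrm{n})\,d\mathrm{n}\ge 0$ and using the integrating factor $e^{\int_0^t\Phi}$, the death contribution enters only through a factor $e^{-\int_s^t\Phi\,d\sigma}\le 1$, while the birth term is pointwise bounded by $\|\mathbb{B}\|_\infty\bar{K}\|\mathcal{C}(s)\|_{L^1}^2\le\|\mathbb{B}\|_\infty\bar{K}\Theta^2$; integrating in $t$ reproduces the bound $\mathcal{H}$.

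\textbf{Step 3 ($W^{1,\infty}$ bound on $\mathcal{C}$).} This is the delicate part. I would differentiate $\partial_t\mathcal{C}=B-D$ in $\mathrm{m}$ and set $w:=\partial_\mathrm{m}\mathcal{C}$. Differentiating the death term $D=\mathcal{C}\,\Phi$ isolates the linear-in-$w$ piece $w\,\Phi$ together with $\mathcal{C}\int_0^\mathfrak{R}\partial_\mathrm{m}\mathbb{K}(\mathrm{m},\mathrm{n})\mathcal{C}\,d\mathrm{n}$, so the evolution reads $\partial_t w=-\Phi\,w+R$, where $R$ collects all terms free of $w$. The only subtle manipulation is the Leibniz rule applied to the birth term, whose inner integral carries the moving lower limit $\mathrm{m}$: it generates a boundary contribution proportional to $\mathbb{K}(\mathrm{m},z)\mathbb{B}(\mathrm{m},\mathrm{m},z)\mathcal{C}(t,\mathrm{m})$ plus an interior term with $\partial_\mathrm{m}\mathbb{B}$. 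Every summand of $R$ is then controlled by $\|\mathbb{B}\|_{W^{1,\infty}_{loc}}$, $\|\mathbb{K}\|_{W^{1,\infty}_{loc}}$ and the $L^\infty$, $L^1$ bounds on $\mathcal{C}$ from Steps 1--2, giving $\|R(t,\cdot)\|_\infty\le C(\mathfrak{T},\mathfrak{R})$. Because $\Phi\ge 0$, the Duhamel representation of $\partial_t w=-\Phi w+R$ has integrating factor bounded by $1$, whence $\|w(t)\|_\infty\le\|\partial_\mathrm{m}\mathcal{C}^{in}\|_\infty+\mathfrak{T}\,C(\mathfrak{T},\mathfrak{R})$; combined with Step 2 this is exactly (\ref{bound1}).

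\textbf{Main obstacle.} I expect the crux to be Step 3, on two counts. First, one must rigorously justify that $\mathcal{C}$ is genuinely spatially Lipschitz, so that the $\mathrm{m}$-differentiation and the moving-endpoint Leibniz step are legitimate; I would do this by running the whole argument on difference quotients $\big(\mathcal{C}(t,\mathrm{m}+\hslash)-\mathcal{C}(t,\mathrm{m})\big)/\hslash$ with a Gronwall estimate uniform in the increment (or, equivalently, by regularising the kernels and $\mathcal{C}^{in}$ and passing to the limit using the bounds just derived). Second, one must keep careful track of the boundary term $\mathbb{B}(\mathrm{m},\mathrm{m},z)$ produced by the lower limit $\mathrm{m}$, which has no counterpart in the pure coagulation--fragmentation treatment of \cite{bourgade2008convergence} and is the genuinely new feature of the collisional model.
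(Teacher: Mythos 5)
Your argument is correct and, for the two continuous estimates, follows essentially the same route as the paper: integrate (\ref{trunceq}) in time and discard the non-positive death term to obtain the $L^{\infty}$ bound on $\mathcal{C}$, then differentiate in $\mathrm{m}$, pick up the Leibniz boundary contribution $\mathbb{K}(\mathrm{m},z)\mathbb{B}(\mathrm{m},\mathrm{m},z)\mathcal{C}(t,\mathrm{m})\mathcal{C}(t,z)$ from the moving lower limit, and close with an integral inequality for $\|\partial_{\mathrm{m}}\mathcal{C}\|_{\infty}$. Two differences deserve mention. In Step 3 the paper does not exploit the sign of the death term: it keeps $\upsilon\int_{0}^{t}\|\partial_{\mathrm{m}}\mathcal{C}\|_{\infty}\,ds$ with $\upsilon=\|\mathbb{K}\|_{\infty}\|\mathcal{C}\|_{\infty,1}$ on the right-hand side and invokes Gronwall's lemma, producing an exponential constant, whereas your Duhamel representation with $\Phi\geq 0$ gives a bound linear in $\mathfrak{T}$; both land in $\mathcal{H}(\mathfrak{T},\mathfrak{R})$, so this is only a mild sharpening. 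More substantively, the paper's proof never establishes the stated $L^{\infty}$ bound on the discrete solution $\mathcal{C}^{\hslash}$ at all --- it announces that the principal purpose is the bound on $\mathcal{C}$ and treats only the continuous solution --- so your Step 1 (drop the death term, absorb $\int\mathbb{B}\leq\|\mathbb{B}\|_{\infty}\Delta\mathrm{m}_{a}$ into the prefactor $1/\Delta\mathrm{m}_{a}$, bound the remaining double sum by $\bar{K}\Theta^{2}$, and iterate over $N\leq\mathfrak{T}/\Delta t$ steps) supplies exactly the missing half of the proposition. Your closing remark about justifying the $\mathrm{m}$-differentiation through difference quotients likewise addresses a regularity point that the paper passes over in silence.
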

\begin{proof}
The principal purpose of this proposition is to obtain a bound on  the solution $\mathcal{C}$ to Eq.(\ref{maineq}). In consequence, integrating Eq.(\ref{trunceq}) with respect to the time variable provides the following result
	\begin{align*}
\mathcal{C}(t,\mathrm{m})& \leq \mathcal{C}^{in}(\mathrm{m})+ \int_{0}^{t}\int_0^\mathfrak{R}\int_{\mathrm{m}}^{\mathfrak{R}} \mathbb{K}(\mathrm{n},z)\mathbb{B}(\mathrm{m},\mathrm{n},z)\mathcal{C}(s,\mathrm{n})\mathcal{C}(s,z)d\mathrm{n}\,dz\,ds \\ 
& \leq \mathcal{C}^{in}(\mathrm{m})+\|\mathbb{K}\|_{\infty}\|\mathbb{B}\|_{\infty}{\|\mathcal{C}\|}^{2}_{\infty,1}t,
\end{align*}	
	where $\|\mathcal{C}\|_{\infty,1}$ represents the norm of $\mathcal{C}$ in $L^{\infty}(0,\mathfrak{T}; L^{1}]0, \mathfrak{R}[\,)$. Hence,
\begin{align*}
\|\mathcal{C}\|_{L^{\infty}(]0,\mathfrak{T}[ \times ]0, \mathfrak{R}[)} \leq \mathcal{H}(\mathfrak{T},\mathfrak{R}).
\end{align*}
Now, let us go to the conclusion of an analysis of (\ref{bound1}). First, integrate Eq.(\ref{trunceq}) over the time variable $t$, and then differentiate it with respect to $\mathrm{m}$ which lead to
\begin{align*}
\frac{\partial \mathcal{C}(t,\mathrm{m})}{\partial \mathrm{m}} &=  \frac{\partial \mathcal{C}^{in}(\mathrm{m})}{\partial \mathrm{m}}+ \frac{\partial}{\partial \mathrm{m}}\biggl( \int_{0}^{t}\int_0^\mathfrak{R}\int_{\mathrm{m}}^{\mathfrak{R}} \mathbb{K}(\mathrm{n},z)\mathbb{B}(\mathrm{m},\mathrm{n},z)\mathcal{C}(s,\mathrm{n})\mathcal{C}(s,z)d\mathrm{n}\,dz\,ds\biggl) \\
& -\frac{\partial}{\partial \mathrm{m}}\biggl( \int_{0}^{t}\int_{0}^{\mathfrak{R}}\mathbb{K}(\mathrm{m},\mathrm{n})\mathcal{C}(t,\mathrm{m})\mathcal{C}(t,\mathrm{n})\,d\mathrm{n}\biggl).
\end{align*}
The following condition is produced by simplifying computations and using the maximum value over the $\mathrm{m}$ domain as follows
\begin{align*}
\left\|{\frac{\partial \mathcal{C}(t)}{\partial \mathrm{m}}}\right\|_{\infty} &\leq  \left\|{\frac{\partial \mathcal{C}^{in}(\mathrm{m})}{\partial \mathrm{m}}}\right\|_{\infty}+ [\|\mathbb{K}\mathbb{B}\|_{\infty}\|\mathcal{C}\|_{\infty,1}\|\mathcal{C}\|_{\infty}+ \|\mathbb{K}\|_{\infty}\|\mathbb{B}\|_{W^{1,\infty}}{\|\mathcal{C}\|}^{2}_{\infty,1}\\
& + \|\mathbb{K}\|_{W^{1,\infty}}\|\mathcal{C}\|_{\infty,1}\|\mathcal{C}\|_{\infty}
]t+ \|\mathbb{K}\|_{\infty}\|\mathcal{C}\|_{\infty,1}\int_{0}^{t}\left \|\frac{\partial \mathcal{C}}{\partial \mathrm{m}}\right\|_{\infty}\,ds,
\end{align*}
which can be written in a more coherent way
\begin{align*}
\left\|{\frac{\partial \mathcal{C}(t)}{\partial \mathrm{m}}}\right\|_{\infty}\leq \Upsilon (t)+\upsilon \int_{0}^{t}\left \|\frac{\partial \mathcal{C}}{\partial \mathrm{m}}\right\|_{\infty}\,ds,
\end{align*}
where 
$$\Upsilon (t)=\left\|{\frac{\partial \mathcal{C}^{in}(\mathrm{m})}{\partial \mathrm{m}}}\right\|_{\infty}+ [\|\mathbb{K}\mathbb{B}\|_{\infty}\|\mathcal{C}\|_{\infty,1}\|\mathcal{C}\|_{\infty}+ \|\mathbb{K}\|_{\infty}\|\mathbb{B}\|_{W^{1,\infty}}{\|\mathcal{C}\|}^{2}_{\infty,1}
 + \|\mathbb{K}\|_{W^{1,\infty}}\|\mathcal{C}\|_{\infty,1}\|\mathcal{C}\|_{\infty}
]t,$$
and
$$\upsilon =\|\mathbb{K}\|_{\infty}\|\mathcal{C}\|_{\infty,1}.$$
In addition, the following proof is established by the application of Gronwall's lemma and integration by parts:
\begin{align*}
\left\|{\frac{\partial \mathcal{C}(t)}{\partial \mathrm{m}}}\right\|_{\infty}  \leq  &\Upsilon (t)+ \int_{0}^{t}\Upsilon (s) \upsilon e^{\int_{s}^{t}\upsilon\,dr}\,ds\\	
 \leq &\Upsilon (0) e^{\upsilon t} + (\|\mathbb{K}\mathbb{B}\|_{\infty}\|\mathcal{C}\|_{\infty,1}\|\mathcal{C}\|_{\infty}+ \|\mathbb{K}\|_{\infty}\|\mathbb{B}\|_{W^{1,\infty}}{\|\mathcal{C}\|}^{2}_{\infty,1}\\
&+ \|\mathbb{K}\|_{W^{1,\infty}}\|\mathcal{C}\|_{\infty,1}\|\mathcal{C}\|_{\infty})[(e^{\upsilon t}-1)].
\end{align*}
Therefore,
\begin{align*}
\left\|{\frac{\partial \mathcal{C}}{\partial \mathrm{m}}}\right\|_{L^{\infty}(]0,\mathfrak{T}[ \times ]0, \mathfrak{R}[)} \leq \mathcal{H}(\mathfrak{T},\mathfrak{R}),
\end{align*}
which  concludes the result (\ref{bound1}). 
\end{proof}
Next, the RHS of  equation (\ref{fully}) is converted into the continuous form, and we obtain the first-order terms in that process which is explained in the following lemma. 
The discrete collisional birth-death part  in (\ref{fully}) is expressed as 
\begin{align}\label{errord}
B_{\mathcal{C}}(a)-D_{\mathcal{C}}(a)&=\frac{1}{\Delta \mathrm{m}_a}\sum_{l=1}^{\mathfrak{I}}\sum_{j=a}^{\mathfrak{I}}\mathbb{K}_{j,l}\mathcal{C}^{n}_{j}\mathcal{C}^{n}_{l}\Delta \mathrm{m}_{j}\Delta \mathrm{m}_{l}\int_{\mathrm{m}_{a-1/2}}^{p_{j}^{a}}\mathbb{B}(\mathrm{m},\mathrm{m}_{j},\mathrm{m}_{l})\,d\mathrm{m}-\sum_{j=1}^{\mathfrak{I}}\mathbb{K}_{a,j}\mathcal{C}^{n}_{a}\mathcal{C}^{n}_{j}\Delta \mathrm{m}_{j}.
\end{align}	
The subsequent lemma offers a simplified version of the preceding discrete terms.
\begin{lem}
Let $\mathcal{C}^{in}$ $\in W^{1,\infty}_{loc}$, and $\Delta \mathrm{m}_{a}=\hslash$ $\forall a$ be the uniform mesh. Also presuming that $\mathbb{K}$ and $\mathbb{B}$ satisfy the following criteria: $\mathbb{K},\mathbb{B}\in W^{1,\infty}_{loc}$, then for $(s,\mathrm{m})\in \tau_{n}\times \wp_i$, where $n\in \{0,1,\ldots, N-1\}$ and $a\in\{1,2,\ldots,\mathfrak{I}\}$ leads to
\begin{align}\label{convert1}
B_{\mathcal{C}}(a)-D_{\mathcal{C}}(a)&=\int_{0}^{\mathfrak{R}}\int_{{\Xi}^{h}(\mathrm{m})}^{\mathfrak{R}}\mathbb{K}^{\hslash}(\mathrm{n},z)\mathbb{B}^{\hslash}(\mathrm{m},\mathrm{n},z)\mathcal{C}^{\hslash}(s,\mathrm{n})\mathcal{C}^{\hslash}(s,z)\,d\mathrm{n}dz \nonumber \\ &-\int_{0}^{\mathfrak{R}}\mathbb{K}^{\hslash}(\mathrm{m},\mathrm{n})\mathcal{C}^{\hslash}(s,\mathrm{m})\mathcal{C}^{\hslash}(s,\mathrm{n})\,d\mathrm{n}+\varepsilon(\hslash).
\end{align}
In the strong $L^1$ topology, $\varepsilon(\hslash)$  defines the first order term with regard to $\hslash$: 
\begin{align}
\|\varepsilon(\hslash)\|_{L^1}\leq \frac{\|\mathbb{K}\mathbb{B}\|_{L^{\infty}}}{2}{\|\mathcal{C}^{in}\|}^{2}_{L^1}\,e^{4\lambda \mathfrak{R} \|\mathbb{B}\|_{L^{\infty}} M_{1}^{in} \mathfrak{T}} \hslash.
\end{align}
\end{lem}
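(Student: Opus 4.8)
The plan is to rewrite each of the two discrete sums in \eqref{errord} as an integral against the piecewise-constant reconstructions $\mathbb{K}^{\hslash},\mathbb{B}^{\hslash},\mathcal{C}^{\hslash}$, isolate the contributions that the continuous form does not reproduce, and show that what is left over is $O(\hslash)$ in the strong $L^1$ topology. I would begin with the death term, which converts \emph{exactly}: for $\mathrm{m}\in\wp_a$ the definitions of $\mathbb{K}^{\hslash}$ and $\mathcal{C}^{\hslash}$ give $\int_{0}^{\mathfrak{R}}\mathbb{K}^{\hslash}(\mathrm{m},\mathrm{n})\mathcal{C}^{\hslash}(s,\mathrm{m})\mathcal{C}^{\hslash}(s,\mathrm{n})\,d\mathrm{n}=\sum_{j=1}^{\mathfrak{I}}\mathbb{K}_{a,j}\mathcal{C}^{n}_{a}\mathcal{C}^{n}_{j}\Delta\mathrm{m}_{j}=D_{\mathcal{C}}(a)$, so the death term contributes nothing to $\varepsilon(\hslash)$ and the entire remainder comes from the birth term.

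For the birth term I would split the inner sum $\sum_{j=a}^{\mathfrak{I}}$ using the definition of $p_{j}^{a}$ into the diagonal index $j=a$ and the off-diagonal indices $j\ge a+1$. When $j\ge a+1$ one has $p_{j}^{a}=\mathrm{m}_{a+1/2}$, so the integral runs over the whole cell $\wp_a$; since $\Xi^{\hslash}(\mathrm{m})=\mathrm{m}_{a+1/2}$ for $\mathrm{m}\in\wp_a$, these terms reassemble into the continuous birth integral $\int_{0}^{\mathfrak{R}}\int_{\Xi^{\hslash}(\mathrm{m})}^{\mathfrak{R}}\mathbb{K}^{\hslash}(\mathrm{n},z)\mathbb{B}^{\hslash}(\mathrm{m},\mathrm{n},z)\mathcal{C}^{\hslash}(s,\mathrm{n})\mathcal{C}^{\hslash}(s,z)\,d\mathrm{n}\,dz$, up to the discrepancy between the midpoint values $\mathbb{B}(\mathrm{m},\mathrm{m}_{j},\mathrm{m}_{l})$ appearing in \eqref{errord} and the cell averages $\mathbb{B}_{a,j,l}$ defining $\mathbb{B}^{\hslash}$. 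Since $\mathbb{B}\in W^{1,\infty}_{loc}$, a mean-value estimate controls this discrepancy by $\|\mathbb{B}\|_{W^{1,\infty}}\hslash$ per triple $(a,j,l)$ on the uniform mesh. The diagonal index $j=a$ is the genuinely new piece: there the upper limit is $p_{a}^{a}=\mathrm{m}_{a}$, so the integral runs only over $[\mathrm{m}_{a-1/2},\mathrm{m}_{a}]$, a set of length $\hslash/2$, while the continuous form integrates $\mathrm{n}$ only over $[\mathrm{m}_{a+1/2},\mathfrak{R}]$ and thus omits the whole cell $\wp_a$. This diagonal contribution is therefore entirely unmatched and is the dominant part of $\varepsilon(\hslash)$.

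To bound $\varepsilon(\hslash)$ in $L^1$ I would collect the diagonal piece, which on $\wp_a$ equals $\sum_{l=1}^{\mathfrak{I}}\mathbb{K}_{a,l}\mathcal{C}^{n}_{a}\mathcal{C}^{n}_{l}\Delta\mathrm{m}_{l}\int_{\mathrm{m}_{a-1/2}}^{\mathrm{m}_{a}}\mathbb{B}(\mathrm{m},\mathrm{m}_{a},\mathrm{m}_{l})\,d\mathrm{m}$, estimate the inner integral jointly as $\tfrac{\hslash}{2}\|\mathbb{K}\mathbb{B}\|_{\infty}$, and integrate over $\mathrm{m}$, which on the uniform mesh turns the cell value into a factor $\Delta\mathrm{m}_a=\hslash$ after summation over $a$. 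This produces
$$\|\varepsilon(\hslash)\|_{L^1}\le \frac{\hslash}{2}\,\|\mathbb{K}\mathbb{B}\|_{\infty}\sum_{a=1}^{\mathfrak{I}}\sum_{l=1}^{\mathfrak{I}}\mathcal{C}^{n}_{a}\mathcal{C}^{n}_{l}\Delta\mathrm{m}_{a}\Delta\mathrm{m}_{l}=\frac{\hslash}{2}\,\|\mathbb{K}\mathbb{B}\|_{\infty}\,\|\mathcal{C}^{\hslash}(s)\|_{L^1}^{2}.$$
The crucial input is then the a priori $L^1$ bound \eqref{36}, which gives $\|\mathcal{C}^{\hslash}(s)\|_{L^1}^{2}\le \|\mathcal{C}^{in}\|_{L^1}^{2}\,e^{4\lambda\mathfrak{R}\|\mathbb{B}\|_{L^{\infty}}M_{1}^{in}\mathfrak{T}}$ for $s\le\mathfrak{T}$, yielding exactly the stated constant.

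The main obstacle I anticipate is not the diagonal estimate but the bookkeeping needed to certify that the off-diagonal midpoint-versus-cell-average discrepancy from the second paragraph is genuinely of the same order $O(\hslash)$ and does not inflate the clean constant $\tfrac12\|\mathbb{K}\mathbb{B}\|_{\infty}\|\mathcal{C}^{in}\|_{L^1}^2 e^{\cdots}$. Handling it cleanly requires the uniform mesh $\Delta\mathrm{m}_a=\hslash$, the $W^{1,\infty}$ regularity of $\mathbb{B}$, and the first-moment control $\sum_{j}\mathrm{m}_{j}\mathcal{C}^{n}_{j}\Delta\mathrm{m}_{j}\le M_{1}^{in}$ established in the Proposition, so that the weighted sums arising from the Lipschitz remainder stay bounded uniformly in $\hslash$; verifying that this secondary term is absorbed into the first-order remainder is the step that demands the most care.
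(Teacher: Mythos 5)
Your decomposition coincides with the paper's: the death term converts exactly into $\int_{0}^{\mathfrak{R}}\mathbb{K}^{\hslash}(\mathrm{m},\mathrm{n})\mathcal{C}^{\hslash}(s,\mathrm{m})\mathcal{C}^{\hslash}(s,\mathrm{n})\,d\mathrm{n}$, the birth sum is split at $j=a$ versus $j\ge a+1$, the diagonal piece $\sum_{l}\mathbb{K}_{a,l}\mathcal{C}^{n}_{a}\mathcal{C}^{n}_{l}\Delta\mathrm{m}_{l}\int_{\mathrm{m}_{a-1/2}}^{\mathrm{m}_{a}}\mathbb{B}(\mathrm{m},\mathrm{m}_{a},\mathrm{m}_{l})\,d\mathrm{m}$ is taken as $\varepsilon(\hslash)$, and its $L^1$ norm is bounded by $\tfrac{\hslash}{2}\|\mathbb{K}\mathbb{B}\|_{\infty}\|\mathcal{C}^{\hslash}(s)\|_{L^1}^{2}$ followed by the a priori bound (\ref{36}), which is exactly how the stated constant arises. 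Where you genuinely depart from the paper is in the off-diagonal terms: you observe that the scheme carries the partial average $\frac{1}{\Delta\mathrm{m}_a}\int_{\wp_a}\mathbb{B}(u,\mathrm{m}_j,\mathrm{m}_l)\,du$ with the last two arguments frozen at midpoints, whereas the reconstruction $\mathbb{B}^{\hslash}$ in (\ref{convert1}) uses the full cell average $\mathbb{B}_{a,j,l}$; these differ by $O(\|\mathbb{B}\|_{W^{1,\infty}}\hslash)$ in general. The paper's proof silently equates the two and assigns the entire remainder to the diagonal piece, so your reading is the more careful one --- but you should then state the consequence explicitly rather than only flag it as a worry: folding this discrepancy into $\varepsilon(\hslash)$ adds a term of order $\|\mathbb{K}\|_{\infty}\|\mathbb{B}\|_{W^{1,\infty}}\mathfrak{R}\,\|\mathcal{C}^{\hslash}(s)\|_{L^1}^{2}\,\hslash$ (sum the per-cell discrepancy against $\mathbb{K}_{j,l}\mathcal{C}^{n}_{j}\mathcal{C}^{n}_{l}\Delta\mathrm{m}_{j}\Delta\mathrm{m}_{l}$ and integrate over $\mathrm{m}\in\,]0,\mathfrak{R}]$), so the lemma's bound cannot be kept with the verbatim constant $\tfrac{1}{2}\|\mathbb{K}\mathbb{B}\|_{L^{\infty}}\|\mathcal{C}^{in}\|^{2}_{L^1}e^{4\lambda\mathfrak{R}\|\mathbb{B}\|_{L^{\infty}}M_{1}^{in}\mathfrak{T}}$ unless one redefines the target identity to use that partial average. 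This does not change the first-order rate, nor the way the lemma feeds into Theorem \ref{errorth1}, where only the $O(\hslash)$ scaling of $\varepsilon(\hslash)$ is used.
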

\begin{proof}

With a uniform mesh, start with the discrete birth term of Eq.(\ref{errord}) and transform it into the continuous form for $\mathrm{m} \in \wp_a$,
\begin{align}\label{error1}
&\frac{1}{\Delta \mathrm{m}_a}\sum_{l=1}^{\mathfrak{I}}\sum_{j=a}^{\mathfrak{I}}\mathbb{K}_{j,l}\mathcal{C}^{n}_{j}\mathcal{C}^{n}_{l}\Delta \mathrm{m}_{j}\Delta \mathrm{m}_{l}\int_{\mathrm{m}_{a-1/2}}^{p_{j}^{a}}\mathbb{B}(\mathrm{m},\mathrm{m}_{j},\mathrm{m}_{l})\,d\mathrm{m} \nonumber \\
&= \sum_{l=1}^{\mathfrak{I}}\sum_{j=a+1}^{\mathfrak{I}}\mathbb{K}_{j,l}\mathcal{C}^{n}_{j}\mathcal{C}^{n}_{l}\Delta \mathrm{m}_{j}\Delta \mathrm{m}_{l}\frac{1}{\Delta \mathrm{m}_a}\int_{\mathrm{m}_{a-1/2}}^{\mathrm{m}_{a+1/2}}\mathbb{B}(\mathrm{m},\mathrm{m}_{j},\mathrm{m}_{l})\,d\mathrm{m}
+ \sum_{l=1}^{\mathfrak{I}}\mathbb{K}_{a,l}\mathcal{C}^{n}_{a}\mathcal{C}^{n}_{l}\Delta \mathrm{m}_{l}\int_{\mathrm{m}_{a-1/2}}^{\mathrm{m}_{a}}\mathbb{B}(\mathrm{m},\mathrm{m}_{a},\mathrm{m}_{l})\,d\mathrm{m} \nonumber \\
&= \int_{0}^{\mathfrak{R}}\int_{{\Xi}^{h}(\mathrm{m})}^{\mathfrak{R}}\mathbb{K}^{\hslash}(\mathrm{n},z)\mathbb{B}^{\hslash}(\mathrm{m},\mathrm{n},z)\mathcal{C}^{\hslash}(s,\mathrm{n})\mathcal{C}^{\hslash}(s,z)\,d\mathrm{n}dz+\varepsilon(\hslash),
\end{align}
where $\varepsilon(\hslash)= \sum_{l=1}^{\mathfrak{I}}\mathbb{K}_{a,l}\mathcal{C}^{n}_{a}\mathcal{C}^{n}_{l}\Delta \mathrm{m}_{l}\int_{\mathrm{m}_{a-1/2}}^{\mathrm{m}_{a}}\mathbb{B}(\mathrm{m},\mathrm{m}_{a},\mathrm{m}_{l})\,d\mathrm{m}$ is defined. Upon computing the $L^{1}$ norm of $\varepsilon(\hslash)$, the subsequent term is obtained
\begin{align*}
\|\varepsilon(\mathbb{F},\hslash)\|_{L^{1}}&\leq \|\mathbb{K}\mathbb{B}\|_{L^{\infty}}\sum_{a=1}^{\mathfrak{I}}\mathcal{C}_{a}^{n}\Delta \mathrm{m}_{a}\sum_{l=1}^{\mathfrak{I}}\mathcal{C}_{l}^{n}\Delta \mathrm{m}_{l}\int_{\mathrm{m}_{a-1/2}}^{\mathrm{m}_{a}}\,d\mathrm{m}\\\
& \leq \frac{\|\mathbb{K}\mathbb{B}\|_{L^{\infty}}}{2}{\big(\sum_{a=1}^{\mathfrak{I}}\mathcal{C}_{a}^{n}\Delta \mathrm{m}_{a}}\big)^{2}\hslash\\
& \leq \frac{\|\mathbb{K}\mathbb{B}\|_{L^{\infty}}}{2}{\|\mathcal{C}^{in}\|}^{2}_{L^1}\,e^{4\lambda \mathfrak{R} \|\mathbb{B}\|_{L^{\infty}} M_{1}^{in} \mathfrak{T}} \hslash.
\end{align*}	
Now, taking the discrete death term of (\ref{errord}) yields
\begin{align}\label{error2}
\sum_{j=1}^{\mathfrak{I}}\mathbb{K}_{a,j}\mathcal{C}^{n}_{a}\mathcal{C}^{n}_{j}\Delta \mathrm{m}_{j}=\int_{0}^{\mathfrak{R}}\mathbb{K}^{\hslash}(\mathrm{m},\mathrm{n})\mathcal{C}^{\hslash}(s,\mathrm{m})\mathcal{C}^{\hslash}(s,\mathrm{n})\,d\mathrm{n}.
\end{align}
Using the formula (\ref{convert1}), Eq.(\ref{trunceq}) and Eq.(\ref{fully}) exhibit a relation between $\mathcal{C}^{\hslash} $ and $\mathcal{C}$  for $t\in \tau_n$ as
\begin{align}\label{errorfull}
\int_{0}^{\mathfrak{R}}|\mathcal{C}^{\hslash}(t,\mathrm{m})-\mathcal{C}(t,\mathrm{m})|d\mathrm{m} &\leq  \int_{0}^{\mathfrak{R}}|\mathcal{C}^{\hslash}(0,\mathrm{m})-\mathcal{C}(0,\mathrm{m})|d\mathrm{m} + \sum_{\beta=1}^{3}(CB)_{\beta}(\hslash)\nonumber \\ 
&+\int_{0}^{\mathfrak{R}}|\epsilon(t,n)|\,d\mathrm{m}+ \|\varepsilon(\hslash)\|_{L^{1}} t,
\end{align}
where $(CB)_{\beta}(\hslash)$ represents the error terms for $\beta=1,2,3$ as
\begin{align*}
(CB)_{1}(\hslash)= \int_{0}^{t}\int_{0}^{\mathfrak{R}}\int_{0}^{\mathfrak{R}}\int_{\Xi^{h}(\mathrm{m})}^{\mathfrak{R}}|\mathbb{K}^{\hslash}(\mathrm{n},z)\mathbb{B}^{\hslash}(\mathrm{m},\mathrm{n},z)\mathcal{C}^{\hslash}(s,\mathrm{n})\mathcal{C}^{\hslash}(s,z)\\-\mathbb{K}(\mathrm{n},z)\mathbb{B}(\mathrm{m},\mathrm{n},z)\mathcal{C}(s,\mathrm{n})\mathcal{C}(s,z)|\,d\mathrm{n}\,dz\,d\mathrm{m}\,ds,
\end{align*}
\begin{align*}
(CB)_{2}(\hslash)= \int_{0}^{t}\int_{0}^{\mathfrak{R}}\int_{0}^{\mathfrak{R}}\int_{\mathrm{m}}^{\Xi^{h}(\mathrm{m})}\mathbb{K}(\mathrm{n},z)\mathbb{B}(\mathrm{m},\mathrm{n},z)\mathcal{C}(s,\mathrm{n})\mathcal{C}(s,z)\,d\mathrm{n}\,dz\,d\mathrm{m}\,ds,		
\end{align*}
and
\begin{align*}
(CB)_{3}(\hslash)= \int_{0}^{t}\int_{0}^{\mathfrak{R}}\int_{0}^{\mathfrak{R}}|\mathbb{K}^{\hslash}(\mathrm{m},\mathrm{n})\mathcal{C}^{\hslash}(s,\mathrm{m})\mathcal{C}^{\hslash}(s,\mathrm{n})-\mathbb{K}(\mathrm{m},\mathrm{n})\mathcal{C}(s,\mathrm{m})\mathcal{C}(s,\mathrm{n})|\,d\mathrm{n}\,d\mathrm{m}\,ds.
\end{align*}
Considering $|t-t_n|\leq \Delta t$,  the time discretization provides the following expression
\begin{align*}
\int_{0}^{\mathfrak{R}}|\epsilon(t,n)|\,d\mathrm{m} \leq & \int_{t_n}^{t}\int_{0}^{\mathfrak{R}}\int_{0}^{\mathfrak{R}}\int_{\Xi^{h}(\mathrm{m})}^{\mathfrak{R}}\mathbb{K}^{\hslash}(\mathrm{n},z)\mathbb{B}^{\hslash}(\mathrm{m},\mathrm{n},z)\mathcal{C}^{\hslash}(s,\mathrm{n})\mathcal{C}^{\hslash}(s,z)\,d\mathrm{n}\,dz\,d\mathrm{m}\,ds \nonumber \\
& + \int_{t_n}^{t}\int_{0}^{\mathfrak{R}}\int_{0}^{\mathfrak{R}}\mathbb{K}^{\hslash}(\mathrm{m},\mathrm{n})\mathcal{C}^{\hslash}(s,\mathrm{m})\mathcal{C}^{\hslash}(s,\mathrm{n})\,d\mathrm{n}\,d\mathrm{m}\,ds 
+ \int_{t_n}^{t}\int_{0}^{\mathfrak{R}} \varepsilon(\hslash)\,d\mathrm{m}\,ds.
\end{align*}
Given	$\mathbb{K}, \mathbb{B}\in W_{loc}^{1,\infty}$ and for $\mathrm{m},\mathrm{n}\in ]0,\mathfrak{R}],$ we have
	$$	|\mathbb{K}^{\hslash}(\mathrm{m},\mathrm{n})-\mathbb{K}(\mathrm{m},\mathrm{n})|\leq \|\mathbb{K}\|_{W^{1,\infty}} \hslash. $$
As a result, it produces an estimate of $(CB)_{1}(\hslash)$ using the $L^1$ bounds on $\mathcal{C}^{\hslash}$ and $\mathcal{C}$. To begin, divide the expression into four segments
\begin{align*}
(CB)_{1}(\hslash)&\leq  	\int_{0}^{t}\int_{0}^{\mathfrak{R}}\int_{0}^{\mathfrak{R}}\int_{0}^{\mathfrak{R}}|\mathbb{K}^{\hslash}(\mathrm{n},z)-\mathbb{K}(\mathrm{n},z)|\mathbb{B}(\mathrm{m},\mathrm{n},z)\mathcal{C}(s,\mathrm{n})\mathcal{C}(s,z)\,d\mathrm{n}\,dz\,d\mathrm{m}\,ds\\
& + \int_{0}^{t}\int_{0}^{\mathfrak{R}}\int_{0}^{\mathfrak{R}}\int_{0}^{\mathfrak{R}}\mathbb{K}^{\hslash}(\mathrm{n},z)|\mathbb{B}^{\hslash}(\mathrm{m},\mathrm{n},z)-\mathbb{B}(\mathrm{m},\mathrm{n},z)|\mathcal{C}(s,\mathrm{n})\mathcal{C}(s,z)\,d\mathrm{n}\,dz\,d\mathrm{m}\,ds\\
	& +  \int_{0}^{t}\int_{0}^{\mathfrak{R}}\int_{0}^{\mathfrak{R}}\int_{0}^{\mathfrak{R}} \mathbb{K}^{\hslash}(\mathrm{n},z)\mathbb{B}^{\hslash}(\mathrm{m},\mathrm{n},z)|\mathcal{C}^{\hslash}(s,\mathrm{n})-\mathcal{C}(s,\mathrm{n})|\mathcal{C}(s,z)\,d\mathrm{n}\,dz\,d\mathrm{m}\,ds  \\
	& + \int_{0}^{t}\int_{0}^{\mathfrak{R}}\int_{0}^{\mathfrak{R}}\int_{0}^{\mathfrak{R}}\mathbb{K}^{\hslash}(\mathrm{n},z)\mathbb{B}^{\hslash}(\mathrm{m},\mathrm{n},z)\mathcal{C}^{\hslash}(s,\mathrm{n})|\mathcal{C}^{\hslash}(s,z)-\mathcal{C}(s,z)|\,d\mathrm{n}\,dz\,d\mathrm{m}\,ds.
	\end{align*}
The above can be changed by applying Proposition \ref{bound2} and simplifying it to
	\begin{align}\label{error3}
	(CB)_{1}(\hslash) \leq &(\|\mathbb{K}\|_{W^{1,\infty}} \|\mathbb{B}\|_{\infty}+\|\mathbb{K}\|_{\infty} \|\mathbb{B}\|_{W^{1,\infty}})t  \mathfrak{R}^{3}  {\|\mathcal{C}\|}^{2}_{\infty}   \hslash \nonumber \\
	& + \mathfrak{R}^{2}\|\mathbb{K}\|_{\infty}\|\mathbb{B}\|_{\infty}(\|\mathcal{C}\|_{\infty}+ \|\mathcal{C}^{\hslash}\|_{\infty})\int_{0}^{t}\|\mathcal{C}^{\hslash}(s)-\mathcal{C}(s)\|_{L^1}\,ds,
	\end{align}
and 	similar estimation for $(CB)_{3}(\hslash)$ gives
	\begin{align}\label{error4}
	(CB)_{3}(\hslash) \leq \|\mathbb{K}\|_{W^{1,\infty}}t  \mathfrak{R}^{2}  {\|\mathcal{C}\|}^{2}_{\infty}   \hslash 
		 + \mathfrak{R}\|\mathbb{K}\|_{\infty}(\|\mathcal{C}\|_{\infty}+ \|\mathcal{C}^{\hslash}\|_{\infty})\int_{0}^{t}\|\mathcal{C}^{\hslash}(s)-\mathcal{C}(s)\|_{L^1}\,ds.
	\end{align}
	Finally, deal with the terms, $(CB)_{2}(\hslash)$ and $\int_{0}^{\mathfrak{R}}|\epsilon(t,n)|\,d\mathrm{m}$, it is clear that
	\begin{align}\label{error5}
	 (CB)_{2}(\hslash)\leq \frac{t \mathfrak{R}^2}{2}\|\mathbb{K}\mathbb{B}\|_{\infty}{\|\mathcal{C}\|}^{2}_{\infty} \hslash,
	\end{align}
	and 
	\begin{align}\label{error6}
	\int_{0}^{\mathfrak{R}}|\epsilon(t,n)|\,d\mathrm{m} \leq (\|\mathbb{K}\mathbb{B}\|_{\infty}{\|\mathcal{C}^{\hslash}\|}^{2}_{\infty}\mathfrak{R}^3+\|\mathbb{K}\|_{\infty}{\|\mathcal{C}^{\hslash}\|}^{2}_{\infty}\mathfrak{R}^2+\|\epsilon(\hslash)\|_{L^{1}})\Delta t.
	\end{align}
	Furthermore, substituting all the estimations (\ref{error3})-(\ref{error6}) in (\ref{errorfull}) and applying the Gronwall's lemma  conclude the result in (\ref{errorbound}).
\end{proof}
\section{Numerical Testing } \label{testing}
To test the problem's theoretical error estimation, in this part of the article, the discussion over experimental error and experimental order of convergence (EOC) are concluded for two combinations of collision kernel and breakage distribution function.

%
The experimental domain for the volume variable is taken as $[1e-3, 10]$ and computations are run from time 0 to 1.  In order to observe the EOC of the FVS in the absence of analytical results, the following double mesh relation is used:
	\begin{align}\label{approximateeoc}
	\text{EOC}=\ln \left( \frac{\|N_{\mathfrak{I}}-N_{2\mathfrak{I}}\|}{\|N_{2\mathfrak{I}}-N_{4\mathfrak{I}}\|} \right)/\ln(2).
	\end{align}
Here, $N_{\mathfrak{I}}$ stands for the total number of particles having $\mathfrak{I}$ cells, generated by scheme (\ref{fully}).
\\

\textbf{Test case 1:} Consider $\mathbb{K}(\mathrm{n},z)=\mathrm{n}z$ and $\mathbb{B}(\mathrm{m},\mathrm{n},z)=\delta(\mathrm{m}-0.4\mathrm{n})+\delta(\mathrm{m}-0.6\mathrm{n})$. The exponential decay initial condition ($e^{-\mathrm{m}}$) supports this problem. The computational domain is divided into uniform subintervals of 30, 60, 120, and 240 number of cells.  Table 1 represents the numerical errors and the EOC examined through Eq.(\ref{approximateeoc}). It is observed that the scheme provides the error in decreasing pattern as grid gets refined and it is of first-order convergence, as predicted by the theoretical results in Section \ref{Error}.


\begin{table}[!htb]
	           	
	           	  \centering
	           	    \begin{tabular}{ |p{1cm}|p{2cm}|p{1cm}|}
	           	   	            \hline
	           	   	            Cells      & Error & EOC\\
	           	   	                 \hline
	           	   	              30    & - & -  \\
	           	   	             	          \hline
	           	   	             	      60 & 0.4271$\times 10^{-4}$ & - \\
	           	   	             	           \hline
	           	   	             	       120  &   0.2006$\times 10^{-4}$   &1.0899   \\
	           	   	             	          \hline
	           	   	             	        240  & 0.0973$\times 10^{-4}$ & 1.0449  \\
	           	   	             	          \hline
	           	   	             	      480  & 0.0479$\times 10^{-4}$ & 1.0224   \\
	           	   	            \hline
	           	           \end{tabular}
	           	            \caption{ Test case 1}
	           	           \end{table}
\textbf{Test case 2:} For the second example, assume $ \mathbb{K}(\mathrm{n},z)=\mathrm{n}+z$ along with $\mathbb{B}(\mathrm{m},\mathrm{n},z)=\delta(\mathrm{m}-0.4\mathrm{n})+\delta(\mathrm{m}-0.6\mathrm{n})$. As analytical concentration function is unavailable in the literature, the error and the EOC are calculated via Eq.(\ref{approximateeoc}) for uniform cells with 30, 60, 120, 240, and 480 degrees of freedom. The computational volume domain and execution time are identical to the preceding test case. Again, according to Tables 2, the EOC of the FVS for this case is comparable to the first case, i.e., the first-order convergence is noticed.

\begin{table}[!htb]
	           	
	           	  \centering
	           	    \begin{tabular}{ |p{1cm}|p{2cm}|p{1cm}|}
	           	   	            \hline
	           	   	            Cells      & Error & EOC\\
	           	   	                 \hline
	           	   	             30    & - & -  \\
	           	   	            	           	           	          \hline
	           	   	            	           	           	      60 & 0.4309$\times 10^{-4}$ & - \\
	           	   	            	           	           	           \hline
	           	   	            	           	           	       120  &   0.2023$\times 10^{-4}$   &1.0905   \\
	           	   	            	           	           	          \hline
	           	   	            	           	           	        240  & 0.0981$\times 10^{-4}$ & 1.0452  \\
	           	   	            	           	           	          \hline
	           	   	            	           	           	      480  & 0.0483$\times 10^{-4}$ & 1.0226   \\
	           	   	            \hline
	           	           \end{tabular}
	           	            \caption{ Test case 2}
	           	           \end{table}

\section{Conclusions}\label{conclusion}
This article proposed a theoretical convergence analysis of  FVS  for solving the collisional breakage equation for the non-uniform mesh. It yielded a non-conservative scheme, for which a weak convergence analysis has been executed with unbounded collision and breakage distribution kernels. The result was accomplished in the presence of Weak $L^1$ compactness method based on Dunford-Pettis and La Vall$\acute{e}$e Poussin theorems.  In addition, explicit error estimation of the method was also explored for the kernels belonging to $ W^{1,\infty}_{loc}$. We further confirmed that the FVS is first-order accurate for uniform meshes by using two numerical cases of the model.
					
\section{Acknowledgments}
The work of Sanjiv Kumar Bariwal and Rajesh Kumar is supported by  CSIR India (file No. 1157/CSIR-UGC NET June 2019) and Science and Engineering Research Board (SERB), DST India (project MTR/2021/000866), respectively. 
		
\section*{Conflict of Interest}There are no conflicts of interest related to this work.

\bibliography{colref}
\bibliographystyle{ieeetr}
				\end{document}